\newtheorem{theorem}{Theorem}
\newtheorem{corollary}[theorem]{Corollary}
\newtheorem{definition}[theorem]{Definition}
\newtheorem{lemma}[theorem]{Lemma}
\newtheorem{proposition}[theorem]{Proposition}
\newtheorem{remark}[theorem]{Remark}
\DeclareMathOperator{\dive}{div}
\newenvironment{proof}[1][Proof]{\noindent\textbf{#1.} }{\ \rule{0.5em}{0.5em}}
\begin{document}
\author{Josef Diblik, Marek Galewski, Igor Kossowski and Dumitru Motreanu, }
\title{On competing $\left( p,q\right) -$Laplacian Drichlet problem with  unbounded weight}
\date{}
\maketitle
\begin{abstract}
We investigate the existence of generalized solutions to
coercive competing system driven by the $\left( p,q\right) -$Laplacian  with unbounded perturbation  corresponding to the leading term in the differential operator and with convection depending on the gradient. Some abstract principle leading to the existence of generalized solutions is
also derived basing on the Galerkin scheme.
\end{abstract}

Keywords: Dirichlet problems, competing $(p,q)$-Laplacian, convection term,
generalized solution, approximation scheme, unbounded perturbation

MSC 2020: 35H30, 35J92, 35D30

\section{Introduction}

In this paper we want to study the following problem with homogeneous
Dirichlet boundary condition and an unbounded term $g(u)$ in the
differential operator namely: 
\begin{equation}
\begin{cases}
-\dive\left( g(u\left( x\right) )|\nabla u\left( x\right) |^{p-2}\nabla
u\left( x\right) \right) +\dive\left( |\nabla u\left( x\right)
|^{q-2}\nabla u\left( x\right) \right) =f(x,u\left( x\right) ,\nabla u\left(
x\right) ) & \text{ in }\Omega , \\ 
u\left( x\right) =0 & \text{ on }\partial \Omega%
\end{cases}
\label{problemCL}
\end{equation}%
Here $p>q>1$. Problem (\ref{problemCL}) is considered on a bounded domain $%
\Omega \subset \mathbb{R}^{N}$ with Lipschitz boundary $\partial \Omega $
and where the convection term $f:\Omega \times \mathbb{R}\times \mathbb{R}%
^{N}\to \mathbb{R}$ is subject to a suitable growth leading to the well posedness and
coercivity of the associated operator. The weight $g$ employed above
satisfies the following assumption:$\smallskip $

(\textbf{H1) }$g:%
\mathbb{R}
\rightarrow \left[ a_{0},+\infty \right) $\textit{\ is a continuous function
with }$a_{0}>0$\textit{. }$\smallskip $

The operator appearing in the left hand side of (\ref{problemCL}) with $%
g\equiv 1$ is called the competing $\left( p,q\right) -$Laplacian and was
considered for the relevant counterpart of (\ref{problemCL}) in \cite%
{LiuLivreaMontreanuZeng} and next in several subsequent research \cite{fig1}, \cite%
{fig2}, \cite{MotreanuOpenMath}, \cite{vetro1} where it was generalized and investigated from other
perspectives and in a more general setting, like to variable exponent case,
on the Heisenberg group and also with a type of Kichrhoff terms. Related results are aslo to be found in \cite{Bu}, \cite{figRaz}, \cite{RazaniBVP} The
difficulty in considering problems like 
\begin{equation*}
\begin{cases}
-\dive\left( |\nabla u|^{p-2}\nabla u\right) +\dive\left( |\nabla
u|^{q-2}\nabla u\right) =f(x,u,\nabla u) & \text{ in }\Omega , \\ 
u=0 & \text{ on }\partial \Omega 
\end{cases}%
\end{equation*}%
for a suitable convection $f$, lies in their lack of enough monotonicity on
the left hand side, despite the relevant operator being bounded. This is
why in \cite{MotreanuOpenMath} the Author proposed an approximation scheme
based directly on the ideas of the proof of the Browder-Minty Theorem on the
existence of solutions to nonlinear equations with pseudomonotone operators.
The numerical scheme which we introduce further leadsaters in order to provide some abstract result, and which originates from \cite{MotreanuOpenMath}, leads to the existence of the classical weak solution when applied for example to the problem%
\begin{equation*}
\begin{cases}
-\dive\left( |\nabla u|^{p-2}\nabla u\right) -\dive\left( |\nabla
u|^{q-2}\nabla u\right) =f(x,u,\nabla u) & \text{ in }\Omega , \\ 
u=0 & \text{ on }\partial \Omega 
\end{cases}%
\end{equation*}%
as shown in \cite{MotreanuOpenMath}.$\bigskip $

On the other hand in \cite{MotreanuTornatore} the Authors investigate via
suitable truncation technique the following problem with convection $f$ and
with the unbounded weight satisfying the structure condition similar to this
in \textbf{(H1)} (with some degeneracy added):%
\begin{equation}
\begin{cases}
-\dive\left( \nu (x,u)|\nabla u|^{p-2}\nabla u\right) =f(x,u,\nabla u)
& \text{ in }\Omega , \\ 
u=0 & \text{ on }\partial \Omega%
\end{cases}
\label{problem with a}
\end{equation}%
In this case, the differential operator pertaining to the left hand side is
not necessarily bounded which prevents using the mentioned Browder-Minty
Theorem. Nevertheless, the assumptions on convection $f$ lead to the
conclusion that the set of weak solutions is bounded and also that there is
a $L^{\infty }$ bound on each weak solution. Then it is shown that for the
original and truncated problem solutions coincide and the solvability of (%
\ref{problem with a}) follows.$\bigskip $

Our aim in present submission is to combine the approach from \cite%
{MotreanuOpenMath} as far as the approximation scheme is considered and the
estimation and truncation techniques from \cite{MotreanuTornatore} in order
to investigate problem (\ref{problemCL}). Thus we find some constant $R>0$ such that
for 
\begin{equation}
g_{R}(t)=%
\begin{cases}
g(t) & \text{ if }t\in \lbrack -R,R] \\ 
g(R) & \text{ if }t>R%
\end{cases}
\label{def of g}
\end{equation}%
we consider the following auxiliary problem:%
\begin{equation}
\begin{cases}
-\dive\left( g_{R}( u\left( x\right) )|\nabla
u\left( x\right) |^{p-2}\nabla u\left( x\right) \right) +\dive\left(
|\nabla u\left( x\right) |^{q-2}\nabla u\left( x\right) \right) =f(x,u\left(
x\right) ,\nabla u\left( x\right) ) & \text{ in }\Omega , \\ 
u\left( x\right) =0 & \text{ on }\partial \Omega .%
\end{cases}
\label{problemCL_aux}
\end{equation}%
It is shown that bounded solutions to (\ref{problemCL}) and (\ref%
{problemCL_aux}) coincide. Thus we obtain the solvability of (\ref{problemCL}%
) in the sense of generalized solution as suggested in \cite{MotreanuOpenMath}. We
introduce also some abstract tool leading to the existence of generalized
solutions by dropping the monotonicity related assumption in a version of
the Browder-Minty Theorem while retaining the continuity and the coercivity assumptions. To the best of our knowledge such a problem has
not been considered in the literature before and may lead to further
interesting problems and observations as it can be shifted to other
settings. 
\par The convection term is expressed through the Nemytskij operator
associated with a Carath\'{e}odory function $f:\Omega \times \mathbb{R}%
\times \mathbb{R}^{N}\rightarrow \mathbb{R}$, i.e., $f(x,s,\xi )$ is
measurable in $x\in \Omega $ for all $(s,\xi )\in \mathbb{R}\times \mathbb{R}%
^{N}$ and is continuous in $(s,\xi )\in \mathbb{R}\times \mathbb{R}^{N}$ for
a.e. $x\in \Omega $. The function $f$ will be subject to appropriate growth
conditions provided below \textbf{(H2)}-\textbf{(H3)} leading to the
coercivity, boundedness and continuity of the truncated operator.$\bigskip $

We seek the solutions to problem (\ref{problemCL}) in $W_{0}^{1,p}(\Omega )$, which is a standard Sobolev space, we refere to \cite{evans} for a suitable background.
. For every real number $r>1$, we set $r^{\prime }=r/(r-1)$ (the H\"{o}lder
conjugate of $r$ ). For $1<q<p<+\infty $ we have $p^{\prime
}=p/(p-1)<q^{\prime }=q/(q-1)$. The Sobolev spaces $W_{0}^{1,p}(\Omega )$
and $W_{0}^{1,q}(\Omega )$ are endowed with the norms $\Vert \nabla (\cdot
)\Vert _{L^{p}(\Omega )}$ and $\Vert \nabla (\cdot )\Vert _{L^{q}(\Omega )}$%
, respectively, where $\Vert \cdot \Vert _{L^{r}(\Omega )}$ stands for the
usual $L^{r}$-norm. The dual spaces of $W_{0}^{1,p}(\Omega )$ and $%
W_{0}^{1,q}(\Omega )$ are denoted $W^{-1,p^{\prime }}(\Omega )$ and $%
W^{-1,q^{\prime }}(\Omega )$, respectively. We assume that $N<p$. In this
case there is a continuous embedding of $W_{0}^{1,p}(\Omega )$ into $C\left( 
\overline{\Omega }\right) $ and hence into $L^{r}(\Omega )$ for any $r>0$.
There is a constant $C_{S}$ such that 
\begin{equation}
\max_{x\in \overline{\Omega }}\left\vert u\left( x\right) \right\vert \leq
C_S \Vert \nabla u\Vert _{L^{p}(\Omega )}  \label{Sobolev Ineq}
\end{equation}%
for all $u\in W_{0}^{1,p}(\Omega )$.\texttt{\ }The (negative) $p$-Laplacian $%
-\Delta _{p}:W_{0}^{1,p}(\Omega )\rightarrow W^{-1,p^{\prime }}(\Omega )$ is
defined as follows%
\begin{equation*}
\left\langle -\Delta _{p}u,v\right\rangle =\int_{\Omega }|\nabla
u(x)|^{p-2}\nabla u(x)\cdot \nabla v(x)\,\mathrm{d}x\quad \text{ for all }%
u,v\in W_{0}^{1,p}(\Omega )
\end{equation*}%
and it is a strictly monotone, continuous operator, potential and bounded
and therefore pseudomonotone. Due to the assumption $1<q<p<+\infty $ there
is a continuous embedding $W_{0}^{1,p}(\Omega )\hookrightarrow
W_{0}^{1,q}(\Omega )$. Therefore, the term $\Delta _{q}$ in the left-hand
side of (\ref{problemCL}) is well defined on $W_{0}^{1,p}(\Omega )$. The
notation $|\Omega |$ stands for the Lebesgue measure of $\Omega $.$\bigskip $

The weak solution to (\ref{problemCL}), should it exist, is defined as
follows 

\begin{equation}
\begin{split}
\int_{\Omega }g(u\left( x\right) )|\nabla u\left( x\right) |^{p-2}\nabla
u\left( x\right) \nabla v( x)\,\mathrm{d}x-&\int_{\Omega }|\nabla
u(x)|^{q-2}\nabla u\left( x\right) \nabla v( x)\,\mathrm{d}x  \\
&=\bigskip
\int_{\Omega }f(x,u\left( x\right) ,\nabla u\left( x\right) )v(
x)\,\mathrm{d}x.
\end{split}
\label{def_weak_sol}
\end{equation}%
Such a solution however cannot be reached directly due to the mentioned lack
of monotonicity, the variational
methods are not applicable here. Nevertheless we shall investigate in what
follows the truncated version of following operator $A:W_{0}^{1,p}(\Omega
)\rightarrow W^{-1,p^{\prime }}(\Omega )$
\begin{equation}
\begin{split}
\left\langle A\left( u\right) ,v\right\rangle &=\int_{\Omega }g(
u\left( x\right) )|\nabla u\left( x\right) |^{p-2}\nabla u\left(
x\right) \nabla v(x)\,\mathrm{d}x \\
&-\bigskip 
\int_{\Omega }|\nabla u(x)|^{q-2}\nabla u\left( x\right) \nabla v(
x)\,\mathrm{d}x-\int_{\Omega }f(x,u\left( x\right) ,\nabla u\left(
x\right) )v(x)\,\mathrm{d}x.
\end{split}
\label{def_operat_A}
\end{equation}

The assumptions about the convection which we impose here are as follows and
match these employed in \cite{MotreanuOpenMath} with necessary changes due
to the presence of the weight $g$ and also due the space setting we apply:

\textbf{(H2)}\textit{\ There exist a nonnegative function }$\sigma \in
L^{r_{1}}(\Omega )$\textit{\ and constants }$b\geq 0$\textit{\ and }$c\geq 0$%
\textit{\ such that }

\begin{equation*}
|f(x,s,\xi )|\leq \sigma (x)+b|s|^{r_{2}}+c|\xi |^{p-1}\quad \text{\textit{\
for a.e.} }x\in \Omega \text{, \textit{all} }s\in \mathbb{R},\xi \in \mathbb{%
R}^{N}\text{, }
\end{equation*}%
\textit{where }$r_{1},r_{2}\geq 1$.

\textbf{(H3)} \textit{There exist constants }$c_{0}<a_{0}$, $c_{1}>0$\textit{\
and }$\alpha \in \lbrack 1,p)$\textit{\ such that }

\begin{equation*}
f(x,s,\xi )s\leq c_{0}|\xi |^{p}+c_{1}\left( |s|^{\alpha }+1\right) \text{ 
\textit{for a.e.} }x\in \Omega \text{, all }s\in \mathbb{R},\xi \in \mathbb{R%
}^{N}\text{. }
\end{equation*}

In \cite{MotreanuTornatore} it is mentioned that the function $f:\Omega
\times \mathbb{R}\times \mathbb{R}^{N}\rightarrow \mathbb{R}$ given by%
\begin{equation*}
f(x,s,\xi )=|s|^{\alpha -2}s+\frac{s}{1+s^{2}}\left( |\xi
|^{p-1}+h(x)\right) \text{ \textit{for all} }(x,s,\xi )\in \Omega \times 
\mathbb{R}\times \mathbb{R}^{N},
\end{equation*}

with a constant $\alpha \in \lbrack 1,p)$ and some $h\in L^{\infty }(\Omega )
$ satisfies conditions \textbf{(H2)}-\textbf{(H3)}. Further on we
will mention also another version of assumption (H3). We will consider the
case when $\alpha =p$\ which will imply some relation between $c_{1}$\ and $\lambda _{1}$.

\section{Auxiliary results}

In this short section we cover the material which we need in the sequel and we follow. The
first eigenvalue of $-\Delta _{p}$ is given by

\begin{equation}
\lambda _{1}:=\inf_{u\in W_{0}^{1,p}(\Omega ),u\neq 0}\frac{\int_{\Omega
}|\nabla u(x)|^{p}\,\mathrm{d}x}{\int_{\Omega }|u(x)|^{p}\,\mathrm{d}x}
\label{def_lambda1}
\end{equation}

From the above definition it follows immediately that for all $u\in
W_{0}^{1,p}(\Omega ):$%
\begin{equation}
\Vert u\Vert _{L^{p}(\Omega )}\leq \frac{1}{\lambda _{1}}\Vert \nabla u\Vert
_{L^{p}(\Omega )}  \label{Poincare inequality}
\end{equation}

Let $E$ be a a reflexive Banach space. We denote by $\langle \cdot ,\cdot
\rangle $ the duality pairing between $E$ and its dual $E^{\ast }$, by $%
\rightarrow $ the strong convergence and by $\rightharpoonup $ the weak
convergence. A map $A:E\rightarrow E^{\ast }$ is called bounded if it maps
bounded sets into bounded sets. The map $A:E\rightarrow E^{\ast }$ is said
to be coercive if%
\begin{equation*}
\lim_{\Vert u\Vert \rightarrow +\infty }\frac{\langle A(u),u\rangle }{\Vert
u\Vert }=+\infty.
\end{equation*}

The map $A:E\rightarrow E^{\ast }$ is called pseudomonotone if for each
sequence $\left\{ u_{n}\right\} \subset X$ satisfying $u_{n}\rightarrow u$
in $E$ and $\limsup_{n\rightarrow \infty }\left\langle A\left(
u_{n}\right) ,u_{n}-u\right\rangle \leq 0$, it holds%
\begin{equation*}
\langle A(v),u-v\rangle \leq \liminf_{n\rightarrow \infty }\left\langle
A\left( u_{n}\right) ,u_{n}-v\right\rangle \text{ for all }v\in E.
\end{equation*}%
The map $A:E\rightarrow $ $E^{\ast }$ is monotone if all $u,v\in E$ it holds%
\begin{equation*}
\left\langle A\left( u\right) -A\left( v\right) ,u-v\right\rangle \geq 0;
\end{equation*}%
and hemicontinuous if for all $u,v,h\in E$\ function%
\begin{equation*}
s\rightarrow \left\langle A\left( u+sv\right) ,h\right\rangle
\end{equation*}%
is continuous on $\left[ 0,1\right] $.

The monotone and hemicontinuous map is pseudomonotone. The main theorem for
pseudomonotone operators reads as follows (see \cite{moteranucARL}) and is
proved via Galerkin type approximations:

\begin{theorem}
If the mapping $A:E\rightarrow E^{\ast }$ is pseudomonotone, bounded and
coercive, then it is surjective.
\end{theorem}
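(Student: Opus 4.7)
The plan is to prove surjectivity via a Galerkin scheme, following the classical route for pseudomonotone operators. Fix an arbitrary $f\in E^{\ast }$; the goal is to produce $u\in E$ with $A(u)=f$. After reducing to a separable closed subspace containing the relevant data (or assuming $E$ separable outright), I would choose an increasing sequence of finite-dimensional subspaces $E_{1}\subset E_{2}\subset \ldots $ whose union is dense in $E$, with $\iota _{n}:E_{n}\hookrightarrow E$ the inclusion and $\iota _{n}^{\ast }:E^{\ast }\to E_{n}^{\ast }$ its adjoint.

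Next I would produce Galerkin solutions. On each $E_{n}$ define $A_{n}:E_{n}\to E_{n}^{\ast }$ by $A_{n}(u)=\iota _{n}^{\ast }(A(\iota _{n}u)-f)$. Continuity of $A_{n}$ (on a finite-dimensional space, pseudomonotonicity plus boundedness already yield demicontinuity, hence continuity in finite dimensions) together with coercivity of $A$ gives an $R>0$, independent of $n$, with $\langle A(u)-f,u\rangle >0$ whenever $\|u\|=R$. A standard application of Brouwer's fixed point theorem (equivalently, a degree argument on the closed ball of radius $R$ in $E_{n}$) then yields $u_{n}\in E_{n}$, $\|u_{n}\|\leq R$, satisfying
\[
\langle A(u_{n}),v\rangle =\langle f,v\rangle \qquad \text{for all }v\in E_{n}.
\]
By reflexivity and the uniform bound, pass to a subsequence with $u_{n}\rightharpoonup u$ in $E$. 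Since $A$ is bounded, $\{A(u_{n})\}$ is bounded in $E^{\ast }$, so up to a further subsequence $A(u_{n})\rightharpoonup w$ in $E^{\ast }$; the Galerkin identity on $\bigcup _{m}E_{m}$, which is dense in $E$, forces $w=f$. Taking $v=u_{n}$ gives $\langle A(u_{n}),u_{n}\rangle =\langle f,u_{n}\rangle \to \langle f,u\rangle $, hence
\[
\limsup_{n\to \infty }\langle A(u_{n}),u_{n}-u\rangle =\langle f,u\rangle -\lim_{n\to \infty }\langle A(u_{n}),u\rangle =0.
\]

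Finally I would invoke pseudomonotonicity. For every $v\in E$,
\[
\langle A(u),u-v\rangle \leq \liminf_{n\to \infty }\langle A(u_{n}),u_{n}-v\rangle =\langle f,u-v\rangle ,
\]
where the equality uses $\langle A(u_{n}),u_{n}\rangle \to \langle f,u\rangle $ and $\langle A(u_{n}),v\rangle \to \langle f,v\rangle $. Substituting $v=u+th$ and $v=u-th$ for arbitrary $h\in E$ and $t>0$, dividing by $t$ and letting $t\downarrow 0$ yields $\langle A(u),h\rangle =\langle f,h\rangle $ for every $h\in E$, i.e.\ $A(u)=f$. The main obstacle I expect is the Galerkin existence step, i.e.\ turning coercivity into a finite-dimensional fixed-point/degree argument while keeping the bound $R$ uniform in $n$; the subsequent passage to the limit is then a routine exploitation of the pseudomonotone inequality together with the fact that $\{u_{n}\}$ can be tested against itself in the Galerkin identity. $\rule{0.5em}{0.5em}$
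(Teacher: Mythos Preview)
Your argument is correct and is exactly the classical Galerkin route for pseudomonotone operators. Note that the paper does not supply its own proof of this theorem: it is quoted as a known result from \cite{moteranucARL} with the comment that it ``is proved via Galerkin type approximations,'' which is precisely what you have written out; indeed the paper's later abstract result (Theorem~\ref{theorem - abstract existence}) runs through the very same scheme (Lemma~\ref{lemma from Brouwer} on each $E_n$, uniform bound from coercivity, weak limit, density argument), so your proposal is fully aligned with both the cited source and the paper's own methodology.
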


In the proof of the above result the Brouwer fixed point theorem is
exploited, namely its corollary:

\begin{lemma}
\label{lemma from Brouwer}Let $X$ be a finite dimensional space with the
norm $\Vert \cdot \Vert _{X}$ and let $A:X\rightarrow X^{\star }$ be a
continuous mapping. Assume that there is a constant $R>0$ such that%
\begin{equation*}
\langle A(v),v\rangle \geq 0\text{ for all }v\in X\text{ with }\Vert v\Vert
_{X}=R\text{. }
\end{equation*}%
Then there exists $u\in X$ with $\Vert u\Vert _{X}\leq R$ satisfying $A(u)=0$%
.
\end{lemma}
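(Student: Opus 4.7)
The plan is to argue by contradiction and invoke Brouwer's fixed point theorem. Suppose that $A(u)\neq 0$ for every $u$ in the closed ball $\bar{B}_R=\{v\in X:\|v\|_X\le R\}$, which is compact and convex since $\dim X<\infty$.

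To produce a self-map of $\bar{B}_R$ I would equip $X$ with an auxiliary inner product $(\cdot,\cdot)$ and let $J\colon X^{\star}\to X$ be the associated Riesz isomorphism, so that $\langle w,v\rangle=(Jw,v)$ for all $w\in X^{\star}$ and $v\in X$. Then define
\[
T(u):=-R\,\frac{JA(u)}{\|JA(u)\|_X},\qquad u\in\bar{B}_R.
\]
By the contradiction hypothesis $JA(u)\neq 0$ on $\bar{B}_R$, so $T$ is continuous, and plainly $\|T(u)\|_X=R$, hence $T$ maps $\bar{B}_R$ into itself. Brouwer's theorem then yields a fixed point $u_0=T(u_0)\in\bar{B}_R$, necessarily with $\|u_0\|_X=R$.

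The key (and only non-routine) step is to feed this fixed point back into the pairing: using the Riesz identity and $u_0=T(u_0)$ one gets $\langle A(u_0),u_0\rangle=(JA(u_0),u_0)=-R\,(JA(u_0),JA(u_0))/\|JA(u_0)\|_X$, which is strictly negative because $JA(u_0)\neq 0$. This contradicts $\langle A(v),v\rangle\ge 0$ on the sphere $\|v\|_X=R$, so some $u\in\bar{B}_R$ must satisfy $A(u)=0$. The only mildly annoying obstacle is keeping the two norms on $X$ straight --- the ball $\bar{B}_R$ is measured in $\|\cdot\|_X$, while $J$ is the Riesz map of the possibly distinct inner-product norm --- but since only the \emph{sign} of $\langle A(u_0),u_0\rangle$ is used, this is a cosmetic rather than a genuine difficulty.
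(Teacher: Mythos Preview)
The paper does not actually supply a proof of this lemma; it simply records it as the standard corollary of Brouwer's fixed point theorem that is used in proving the Browder--Minty surjectivity theorem. Your argument is correct and is precisely the classical one: assume $A$ has no zero on $\bar B_R$, push the ball to the sphere via $u\mapsto -R\,JA(u)/\|JA(u)\|_X$, apply Brouwer to obtain a fixed point on the sphere, and then read off $\langle A(u_0),u_0\rangle<0$ from the Riesz identity, contradicting the sign hypothesis. Your remark about the two norms is apt: since only positive-definiteness of the inner product and the nonvanishing of $JA(u_0)$ are used to get the strict sign, the possible discrepancy between $\|\cdot\|_X$ and the Euclidean norm induced by $(\cdot,\cdot)$ is indeed harmless (and one could of course simply take any inner product and observe that all norms on $X$ are equivalent, so the closed $\|\cdot\|_X$-ball is still compact and convex).
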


\section{Estimations}

We proceed with the following lemmas that will further lead to the well
posedness and the coercivity of the operator of operator $A$. The lemma
below is taken after Lemma 2.2 \cite{MotreanuOpenMath} but we provide the proof due the different assumptions when compared to \cite{MotreanuOpenMath}:

\begin{lemma}
\label{lemmaestimationf}Under assumption \textbf{(H2)} there is a constant $C>0$ such that
estimate%
\begin{equation}
\begin{split}
\left|\int_{\Omega }f(x,u\left( x\right) ,\nabla u\left( x\right)
)v(x)\,\mathrm{d}x\right| &\leq 
\int_{\Omega }\left\vert f(x,u\left( x\right) ,\nabla u\left( x\right)
)v\left( x\right) \right\vert \mathrm{~d}x \\ 
&\leq  C\left( \left\Vert \sigma
\right\Vert _{L^{r_{1}}\left(\Omega\right)}+\left\Vert u\right\Vert
_{L^{r_{2}}\left(\Omega\right)}^{r_{2}}+\left\Vert \nabla u\right\Vert _{L^{p}\left(\Omega\right)}^{p-1}\right)
\left\Vert \nabla v\right\Vert _{L^{p}\left(\Omega\right)}%
\end{split}
\label{estim _rhs}
\end{equation}%
for all $u,v\in W_{0}^{1,p}(\Omega )$.
\end{lemma}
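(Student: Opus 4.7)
The plan is to absorb the first inequality (which is immediate from the triangle inequality for integrals) and then bound the integrand pointwise using \textbf{(H2)}. So I start from
\begin{equation*}
\int_{\Omega}|f(x,u(x),\nabla u(x))v(x)|\,\mathrm{d}x \leq \int_{\Omega}\sigma(x)|v(x)|\,\mathrm{d}x + b\int_{\Omega}|u(x)|^{r_{2}}|v(x)|\,\mathrm{d}x + c\int_{\Omega}|\nabla u(x)|^{p-1}|v(x)|\,\mathrm{d}x,
\end{equation*}
and handle each of the three terms separately by H\"older's inequality, producing in each case a factor that is bounded by $\|\nabla v\|_{L^{p}(\Omega)}$ via the embeddings at our disposal.

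For the first term, H\"older gives $\int_{\Omega}\sigma|v|\,\mathrm{d}x\leq \|\sigma\|_{L^{r_{1}}(\Omega)}\|v\|_{L^{r_{1}'}(\Omega)}$, and since $N<p$ the space $W_{0}^{1,p}(\Omega)$ embeds continuously into $L^{r}(\Omega)$ for every $r\geq 1$, so $\|v\|_{L^{r_{1}'}(\Omega)}\leq C_{1}\|\nabla v\|_{L^{p}(\Omega)}$. For the second term, I use H\"older with exponents $1$ and $\infty$: $\int_{\Omega}|u|^{r_{2}}|v|\,\mathrm{d}x\leq \|u\|_{L^{r_{2}}(\Omega)}^{r_{2}}\|v\|_{L^{\infty}(\Omega)}$, and then invoke the Sobolev inequality \eqref{Sobolev Ineq} to bound $\|v\|_{L^{\infty}(\Omega)}\leq C_{S}\|\nabla v\|_{L^{p}(\Omega)}$. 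For the third, since $|\nabla u|^{p-1}\in L^{p'}(\Omega)$ with $\||\nabla u|^{p-1}\|_{L^{p'}(\Omega)}=\|\nabla u\|_{L^{p}(\Omega)}^{p-1}$, H\"older with conjugate exponents $p'$ and $p$ yields $\int_{\Omega}|\nabla u|^{p-1}|v|\,\mathrm{d}x \leq \|\nabla u\|_{L^{p}(\Omega)}^{p-1}\|v\|_{L^{p}(\Omega)}$, and then either the Poincar\'e inequality \eqref{Poincare inequality} or the continuous embedding $W_{0}^{1,p}(\Omega)\hookrightarrow L^{p}(\Omega)$ converts $\|v\|_{L^{p}(\Omega)}$ into a multiple of $\|\nabla v\|_{L^{p}(\Omega)}$.

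Collecting the three estimates and choosing $C>0$ to be the maximum of the prefactors $C_{1}$, $bC_{S}$, $c/\lambda_{1}$ (or a suitable combination thereof) gives the claimed bound. There is really no technical obstacle here; the only thing to watch for is that the hypothesis $N<p$ is what makes the embeddings into $L^{r_{1}'}(\Omega)$ and $L^{\infty}(\Omega)$ available with a single uniform constant, so the plan depends on using that assumption in Step~1 and Step~2, rather than trying to adjust $r_{1}$ and $r_{2}$ through Sobolev exponents which would require subtler restrictions on their range.
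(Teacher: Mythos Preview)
Your proof is correct and follows essentially the same route as the paper: apply \textbf{(H2)} to split into three integrals, then use H\"older together with the embeddings available from $N<p$ to extract a factor $\|\nabla v\|_{L^{p}(\Omega)}$ from each. The only cosmetic difference is in the first term, where the paper pulls out $\|v\|_{C(\overline{\Omega})}$ first and then applies H\"older to $\sigma$ alone, whereas you pair $\sigma$ with $v$ via the exponents $r_{1},r_{1}'$; both are valid under $N<p$ and lead to the same conclusion.
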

\begin{proof}
Via assumption \textbf{(H2)} we get 
\begin{equation}\label{lemEs1}\begin{split}\int_{\Omega}|f(x,u(x),\nabla{u}(x))v(x)|\,\mathrm{d}x &\leq 
\int_{\Omega}|\sigma(x)||v(x)|\,\mathrm{d}x + b \int_{\Omega}|u(x)|^{r_2}|v(x)|\,\mathrm{d}x \\
+ c\int_{\Omega}|\nabla{u}(x)|^{p-1}|v(x)|\,\mathrm{d}x 
\end{split}
\end{equation}
For the first term of right hand side of \eqref{lemEs1} the Sobolev inequality \eqref{Sobolev Ineq} and  the H\"older inequality yield 
\[ 
\int_{\Omega}|\sigma(x)||v(x)|\,\mathrm{d}x \leq \|v\|_{C\left(\overline{\Omega}\right)}\int_{\Omega}|\sigma(x)|\,\mathrm{d}x \leq C_S |\Omega|^{\frac{r_1 - 1}{r_1}} \|\sigma\|_{L^{r_1}\left(\Omega\right)}\|\nabla{v}\|_{L^p\left(\Omega\right)}.
\]
Next, using again the Sobolev inequality for the second summands we obtain 
\[ \int_{\Omega}|u(x)|^{r_2}|v(x)|\,\mathrm{d}x \leq \|v\|_{C\left(\overline{\Omega}\right)}\|u\|^{r_2}_{L^{r_2}\left(\Omega\right)} \leq C_S \|u\|^{r_2}_{L^{r_2}\left(\Omega\right)} \|\nabla{v}\|_{L^p\left(\Omega\right)}. \]
In the final step, by the H\"older inequality combined with the Poincar\'e inequality \eqref{Poincare inequality} we have
\[ 
\int_{\Omega}|\nabla{u}(x)|^{p-1}|v(x)|\,\mathrm{d}x \leq \|\nabla{u}\|^{p-1}_{L^p\left(\Omega\right)}\|v\|_{L^p\left(\Omega\right)} \leq \frac{1}{\lambda_1}\|\nabla{u}\|^{p-1}_{L^p\left(\Omega\right)}\|\nabla{v}\|_{L^p\left(\Omega\right)}.
\]
Combing the above estimations we obtain the assertion.
\end{proof}
\begin{corollary}
\label{corollaty_Niemytskij}Under assumption \textbf{(H2) }the Niemytskij
operator $N_{f}:W_{0}^{1,p}(\Omega )\rightarrow W^{-1,p^{\prime }}(\Omega )$
induced by the Carath\'{e}odory function $f:\Omega \times \mathbb{R}\times 
\mathbb{R}^{N}\rightarrow \mathbb{R}$, namely%
\begin{equation*}
N_{f}(w)=f(\cdot ,w(\cdot ),\nabla w(\cdot )),\quad \forall w\in
W_{0}^{1,p}(\Omega ),
\end{equation*}%
is well defined, continuous and such that there exists a constant $C>0$ for
which the following estimate holds%
\begin{equation*}
\left\Vert N_{f}(u)\right\Vert _{W^{-1,p^{\prime }}(\Omega )}\leq C\left(
\left\Vert \sigma \right\Vert _{L^{r_{1}}}+\left\Vert u\right\Vert
_{L^{r_{2}}}^{r_{2}}+\left\Vert \nabla u\right\Vert _{L^{p}}^{p-1}\right)
,\quad \text{for all }u\in W_{0}^{1,p}(\Omega ).
\end{equation*}
\end{corollary}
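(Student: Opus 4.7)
The plan is to extract all three assertions (well-definedness, the norm estimate, continuity) from Lemma \ref{lemmaestimationf} together with standard properties of Nemytskij/Carath\'eodory operators. Since $f$ is Carath\'eodory, for each $u\in W_0^{1,p}(\Omega)$ the function $x\mapsto f(x,u(x),\nabla u(x))$ is measurable; the growth condition \textbf{(H2)} combined with $W_0^{1,p}(\Omega)\hookrightarrow C(\overline\Omega)\hookrightarrow L^{r_{2}}(\Omega)$ ensures that it belongs to a space where the pairing against $v$ makes sense. The bound from Lemma \ref{lemmaestimationf} then shows that the linear functional $v\mapsto \int_{\Omega}f(x,u,\nabla u)\,v\,\mathrm{d}x$ is continuous on $W_0^{1,p}(\Omega)$, so it defines an element $N_f(u)\in W^{-1,p'}(\Omega)$. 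Dividing the inequality \eqref{estim _rhs} by $\|\nabla v\|_{L^p(\Omega)}$ and taking the supremum over $v\neq 0$ yields the displayed norm estimate directly, with the same constant $C$.

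The only non-routine point is continuity. I would take $u_n\to u$ strongly in $W_0^{1,p}(\Omega)$ and show $N_f(u_n)\to N_f(u)$ in $W^{-1,p'}(\Omega)$. By a standard subsequence argument it suffices, given an arbitrary subsequence, to extract a further subsequence along which the convergence holds. Passing to such a subsequence I may assume $\nabla u_n\to \nabla u$ a.e.\ in $\Omega$ and $|\nabla u_n|\le h$ for some $h\in L^{p}(\Omega)$, while the compact embedding $W_0^{1,p}(\Omega)\hookrightarrow C(\overline\Omega)$ (valid because $N<p$) gives $u_n\to u$ uniformly on $\overline\Omega$. By continuity of $f$ in $(s,\xi)$ it then follows that $f(x,u_n(x),\nabla u_n(x))\to f(x,u(x),\nabla u(x))$ for a.e.\ $x\in\Omega$.

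To turn pointwise convergence into convergence in the dual norm I would again use Lemma \ref{lemmaestimationf}, applied to the difference $f(\cdot,u_n,\nabla u_n)-f(\cdot,u,\nabla u)$. For any $v\in W_0^{1,p}(\Omega)$ with $\|\nabla v\|_{L^p(\Omega)}\le 1$, the three integrals estimated in the proof of Lemma \ref{lemmaestimationf} admit integrable dominants coming from \textbf{(H2)}: $\sigma$, $\sup_n\|u_n\|_{C(\overline\Omega)}^{r_2}$, and the $L^p$-dominant $h^{p-1}$. Vitali's convergence theorem (or dominated convergence after one more routine majorisation using H\"older) then gives $\int_{\Omega}|f(x,u_n,\nabla u_n)-f(x,u,\nabla u)|\,|v|\,\mathrm{d}x\to 0$ uniformly in such $v$, which is exactly $\|N_f(u_n)-N_f(u)\|_{W^{-1,p'}(\Omega)}\to 0$.

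The main obstacle is the continuity step, and more specifically justifying that the pointwise convergence of $f(x,u_n,\nabla u_n)$ produces convergence \emph{uniform in} $v$ on the unit ball of $W_0^{1,p}(\Omega)$; this is what forces the use of a Vitali-type argument with dominants supplied by \textbf{(H2)} rather than a naive dominated-convergence applied pointwise. Once this is handled, the Urysohn subsequence principle upgrades subsequential convergence to convergence of the whole sequence, completing the proof.
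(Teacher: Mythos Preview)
The paper states this corollary without proof, treating it as an immediate consequence of Lemma~\ref{lemmaestimationf}; your argument is correct and supplies precisely the details the paper omits. Well-definedness and the norm bound are indeed read off from \eqref{estim _rhs} exactly as you describe.

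One simplification for the continuity step: you flag the uniformity in $v$ as the main obstacle and invoke Vitali, but in the present setting ($N<p$) this is cheaper than you make it. The Sobolev embedding \eqref{Sobolev Ineq} gives $|v(x)|\le C_S$ for every $v$ with $\Vert\nabla v\Vert_{L^p(\Omega)}\le 1$, so
\[
\left\Vert N_f(u_n)-N_f(u)\right\Vert_{W^{-1,p'}(\Omega)}\le C_S\int_{\Omega}\left|f(x,u_n,\nabla u_n)-f(x,u,\nabla u)\right|\,\mathrm{d}x,
\]
and it suffices to prove $L^1(\Omega)$ convergence of $f(\cdot,u_n,\nabla u_n)$. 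Along your subsequence the integrand tends to zero a.e.\ and is dominated by the fixed $L^1$ function $2\sigma + 2b\sup_n\Vert u_n\Vert_{C(\overline\Omega)}^{r_2}\,\mathbf{1}_{\Omega} + c\bigl(h^{p-1}+|\nabla u|^{p-1}\bigr)$, so ordinary dominated convergence applies; no Vitali argument is needed. The Urysohn subsequence principle then finishes the proof as you indicate.
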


Now we proceed to some bounds that are obtained on the solutions provided
they exist.

\begin{theorem}
\label{theorem_coercive}Under assumptions \textbf{(H1)}-\textbf{(H3)} there
exist constants $R,R_{1}>0$ such that if $u\in W_{0}^{1,p}(\Omega )$ is any
weak solution to (\ref{problemCL}) then $\left\Vert u\right\Vert
_{W_{0}^{1,p}\left(\Omega\right)}\leq R_{1}$ and $\left\Vert u\right\Vert _{C\left( \overline{%
\Omega }\right) }\leq R$.
\end{theorem}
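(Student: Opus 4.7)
The plan is to exploit the coercivity provided by assumptions \textbf{(H1)}--\textbf{(H3)} by testing the weak formulation (\ref{def_weak_sol}) with $v=u$ itself, and then controlling the lower-order terms via the continuous embeddings $W_0^{1,p}(\Omega)\hookrightarrow W_0^{1,q}(\Omega)\hookrightarrow L^\alpha(\Omega)$ and $W_0^{1,p}(\Omega)\hookrightarrow C(\overline{\Omega})$.

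Concretely, suppose $u\in W_0^{1,p}(\Omega)$ is a weak solution. Substituting $v=u$ in (\ref{def_weak_sol}) yields
\begin{equation*}
\int_{\Omega} g(u)|\nabla u|^{p}\,\mathrm{d}x - \int_{\Omega}|\nabla u|^{q}\,\mathrm{d}x = \int_{\Omega} f(x,u,\nabla u)\,u\,\mathrm{d}x.
\end{equation*}
By \textbf{(H1)} the left-hand side is bounded below by $a_0\|\nabla u\|_{L^p(\Omega)}^{p}-\|\nabla u\|_{L^q(\Omega)}^{q}$, while by \textbf{(H3)} the right-hand side satisfies
\begin{equation*}
\int_{\Omega} f(x,u,\nabla u)\,u\,\mathrm{d}x \leq c_0\|\nabla u\|_{L^p(\Omega)}^{p} + c_1\bigl(\|u\|_{L^{\alpha}(\Omega)}^{\alpha}+|\Omega|\bigr).
\end{equation*}
Rearranging and using $a_0>c_0$ gives
\begin{equation*}
(a_0-c_0)\|\nabla u\|_{L^p(\Omega)}^{p} \leq \|\nabla u\|_{L^q(\Omega)}^{q} + c_1\bigl(\|u\|_{L^{\alpha}(\Omega)}^{\alpha}+|\Omega|\bigr).
\end{equation*}

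Since $q<p$ and $\alpha<p$, the embeddings $W_0^{1,p}(\Omega)\hookrightarrow W_0^{1,q}(\Omega)$ together with the Poincar\'e inequality (\ref{Poincare inequality}) (or the Sobolev inequality (\ref{Sobolev Ineq}) applied to the $L^\alpha$ norm) control both right-hand side terms by $C_1\|\nabla u\|_{L^p(\Omega)}^{q}+C_2\|\nabla u\|_{L^p(\Omega)}^{\alpha}+C_3$ for suitable constants. Thus
\begin{equation*}
(a_0-c_0)\|\nabla u\|_{L^p(\Omega)}^{p} \leq C_1\|\nabla u\|_{L^p(\Omega)}^{q} + C_2\|\nabla u\|_{L^p(\Omega)}^{\alpha} + C_3,
\end{equation*}
and because the exponents $q$ and $\alpha$ are strictly less than $p$, this inequality forces $\|\nabla u\|_{L^p(\Omega)}\leq R_1$ for some constant $R_1$ depending only on the data. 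Finally, the continuous embedding $W_0^{1,p}(\Omega)\hookrightarrow C(\overline{\Omega})$ from (\ref{Sobolev Ineq}) yields $\|u\|_{C(\overline{\Omega})}\leq C_S R_1=:R$, proving the $L^\infty$ bound.

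The only delicate point is the sign bookkeeping: the negative $-\|\nabla u\|_{L^q}^q$ term coming from the competing $q$-Laplacian has to be moved to the right and absorbed, which is only possible because $q<p$ gives it strictly sublinear growth relative to the leading $\|\nabla u\|_{L^p}^p$ term; the fact that \textbf{(H3)} imposes $c_0<a_0$ (rather than $c_0<a_0$ after subtraction of something) is exactly what keeps the leading coefficient positive. No monotonicity of the operator $A$ is needed here, which is crucial given the lack of monotonicity discussed in the introduction.
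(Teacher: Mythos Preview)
Your proof is correct and follows essentially the same route as the paper's own argument: test the weak formulation with $v=u$, use \textbf{(H1)} and \textbf{(H3)} to isolate $(a_0-c_0)\|\nabla u\|_{L^p}^p$, control the $q$-term and the $\alpha$-term via H\"older/Poincar\'e (yielding powers $q<p$ and $\alpha<p$ of $\|\nabla u\|_{L^p}$), and then apply the Sobolev embedding (\ref{Sobolev Ineq}) to upgrade the $W_0^{1,p}$ bound to a $C(\overline{\Omega})$ bound. The only cosmetic difference is that the paper writes the constants explicitly as $|\Omega|^{(p-q)/p}$ and $c_1|\Omega|^{(p-\alpha)/p}/\lambda_1^\alpha$, while you leave them as generic $C_1,C_2,C_3$.
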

\begin{proof}
When we insert $v=u$ in formula (\ref{def_operat_A}) we obtain what follows 
\begin{equation*}
\left\langle A\left( u\right) ,u\right\rangle =\int_{\Omega }g(
u\left( x\right)  )|\nabla u\left( x\right) |^{p}\mathrm{~d}%
x-\left\Vert \nabla u\right\Vert _{L^{q}\left(\Omega\right)}^{q}-\int_{\Omega }f\left(
x,u\left( x\right) ,\nabla u\left( x\right) \right) u\left( x\right) \mathrm{%
~d}x.
\end{equation*}%
From assumption \textbf{(H3)} and the Poincar\'e inequality we obtain the
following estimation 
\[
\begin{split}
\|&\nabla{u}\|^q_{L^q\left(\Omega\right)} + \int_{\Omega}f(x,u(x),\nabla{u}(x))\,\mathrm{d}x \\ 
& \leq |\Omega|^{\frac{p-q}{p}}\|\nabla{u}\|^q_{L^p\left(\Omega\right)} + c_0 \|\nabla{u}\|^p_{L^p\left(\Omega\right)} + c_1\int_{\Omega}|u(x)|^{\alpha}\,\mathrm{d}x + c_1|\Omega| \\ 
&\leq|\Omega|^{\frac{p-q}{p}}\|\nabla{u}\|^q_{L^p\left(\Omega\right)} + c_0 \|\nabla{u}\|^p_{L^p\left(\Omega\right)}  + c_1 |\Omega|^{\frac{p-\alpha}{p}}\|u\|^\alpha_{L^p\left(\Omega\right)} + c_1|\Omega| \\
&\leq|\Omega|^{\frac{p-q}{p}}\|\nabla{u}\|^q_{L^p\left(\Omega\right)} + c_0 \|\nabla{u}\|^p_{L^p\left(\Omega\right)} + \frac{c_1|\Omega|^{\frac{p-\alpha}{p}}}{\lambda^\alpha_1}\|\nabla{u}\|^\alpha_{L^p\left(\Omega\right)}
+c_1|\Omega|.
\end{split}
\]
Moreover%
\begin{equation*}
\int_{\Omega}g( u( x))|\nabla u\left(
x\right) |^{p}\mathrm{~d}x\geq a_{0}\left\Vert \nabla u\right\Vert
_{L^{p}(\Omega )}^{p}.
\end{equation*}%
Summarizing 
\begin{equation}
\left( a_{0}-c_{0}\right) \left\Vert \nabla u\right\Vert _{L^{p}(\Omega
)}^{p}\leq |\Omega |^{\frac{p-q}{p}}\left\Vert \nabla u\right\Vert
_{L^{p}(\Omega )}^{q}+\frac{c_1|\Omega|^{\frac{p-\alpha}{p}}}{\lambda^\alpha_1}\|\nabla{u}\|^\alpha_{L^p\left(\Omega\right)}
+c_1|\Omega|.  \label{estimeate_coercivity}
\end{equation}%
Since $q<p$ and $\alpha <p$ we see that there exists $R_{1}>0$
such that $\left\Vert u\right\Vert _{W_{0}^{1,p}}\leq R_{1}$. Then by
inequality (\ref{Sobolev Ineq}) we see that $\left\Vert u\right\Vert
_{C\left( \overline{\Omega }\right) }\leq R$ for $R=R_{1}\cdot C_{S}$.
\end{proof}

\section{On the truncated problem}

The next proposition focuses on the properties of the competitive $\left(
p,q\right) $-Laplacian associated to the truncated weight $g_{R}(
u\left( x\right) )$ with $R$ determined by Theorem \ref%
{theorem_coercive}. Now we consider the operator $A_{R}:W_{0}^{1,p}(\Omega
)\rightarrow W^{-1,p^{\prime }}(\Omega )$%
\begin{equation}
\begin{split}
\left\langle A_{R}\left( u\right) ,v\right\rangle =&\int_{\Omega
}g_{R}( u\left( x\right) )|\nabla u\left( x\right)
|^{p-2}\nabla u\left( x\right) \nabla w\left( x\right) \mathrm{~d}x \\
&-\int_{\Omega }|\nabla u|^{q-2}\nabla u\left( x\right) \nabla w\left(
x\right) \mathrm{~d}x-\int_{\Omega }f(x,u\left( x\right) ,\nabla u\left(
x\right) )w\left( x\right)\,\textup{d}x%
\end{split}
\label{def_A_R}
\end{equation}%
for $u,v\in W_{0}^{1,p}(\Omega )$, connected to problem (\ref{problemCL_aux}%
). For the sake of notation we will write 
\begin{equation*}
\left\langle A_{R}^{1}\left( u\right) ,v\right\rangle =\int_{\Omega
}g_{R}(u\left( x\right))|\nabla u\left( x\right)
|^{p-2}\nabla u\left( x\right) \nabla v\left( x\right) \mathrm{~d}x
\end{equation*}%
and 
\begin{equation*}
\left\langle A_{R}^{2}\left( u\right) ,v\right\rangle =\int_{\Omega }|\nabla
u(x)|^{q-2}\nabla u\left( x\right) \nabla v\left( x\right) \mathrm{~d}%
x+\int_{\Omega }f(x,u\left( x\right) ,\nabla u\left( x\right) )v\left(
x\right) \mathrm{~d}x
\end{equation*}%
for $u,v\in W_{0}^{1,p}(\Omega )$. Then of course $A_{R}=A_{R}^{1}+A_{R}^{2}$%
. Moreover, operator $A_{R}$ has the following properties (with any $R>0$):

\begin{proposition}
\label{proposition_prop_of_AR}Let $R>0$ be fixed. Assume that conditions 
\textbf{(H1)-(H3)} are satisfied. Then the following assertions hold:\newline
(i) $A_{R}$ is well defined and bounded (in the sense that it maps bounded
sets into bounded sets);\newline
(ii) $A_{R}^{1}$ has the $S_{+}$ property, that is, any sequence $\left\{
u_{n}\right\} \subset W_{0}^{1,p}(\Omega )$ with $u_{n}\rightharpoonup u$ in 
$W_{0}^{1,p}(\Omega )$ and%
\begin{equation*}
\limsup_{n\rightarrow \infty }\left\langle A_{R}\left( u_{n}\right)
,u_{n}-u\right\rangle \leq 0
\end{equation*}
\end{proposition}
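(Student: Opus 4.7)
Since $g$ is continuous and $g_R$ coincides with $g$ on $[-R,R]$ while being constant outside it, $g_R$ is bounded on $\mathbb{R}$: $0 < a_0 \le g_R \le M_R := \max_{|t|\le R} g(t)$. H\"older applied to the leading summand of $\langle A_R(u),v\rangle$ gives $|\langle A_R^1(u),v\rangle| \le M_R\|\nabla u\|_{L^p(\Omega)}^{p-1}\|\nabla v\|_{L^p(\Omega)}$; the $q$-Laplacian summand is controlled by H\"older combined with the inclusion $L^p(\Omega) \hookrightarrow L^q(\Omega)$ on a bounded $\Omega$ (giving a $|\Omega|^{(p-q)/(pq)}$-type factor); the convection summand is handled by Corollary~\ref{corollaty_Niemytskij}. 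Summing these three yields $\|A_R(u)\|_{W^{-1,p'}(\Omega)} \le \Phi(\|\nabla u\|_{L^p(\Omega)})$ for a continuous increasing function $\Phi$, which delivers both well-definedness and the bounded-set-to-bounded-set property.

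\textbf{Part (ii), setup and decomposition.} The hypothesis $N<p$ yields the compact embedding $W_0^{1,p}(\Omega) \hookrightarrow\hookrightarrow C(\overline{\Omega})$. From $u_n \rightharpoonup u$ in $W_0^{1,p}(\Omega)$ I first extract $u_n \to u$ uniformly on $\overline{\Omega}$; since $\{u_n\} \cup \{u\}$ sits in a fixed compact interval and $g_R$ is uniformly continuous there, I obtain $g_R(u_n) \to g_R(u)$ uniformly on $\overline{\Omega}$. Then I split
\[
\langle A_R^1(u_n),u_n-u\rangle = I_n + J_n,
\]
with
\[
I_n := \int_\Omega g_R(u_n)\bigl(|\nabla u_n|^{p-2}\nabla u_n - |\nabla u|^{p-2}\nabla u\bigr)\cdot(\nabla u_n-\nabla u)\,dx,
\]
\[
J_n := \int_\Omega g_R(u_n)|\nabla u|^{p-2}\nabla u\cdot(\nabla u_n-\nabla u)\,dx.
\]
The pointwise monotonicity of $\xi \mapsto |\xi|^{p-2}\xi$ together with $g_R \ge a_0 > 0$ gives $I_n \ge 0$. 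For $J_n$, the uniform convergence of $g_R(u_n)$ and dominated convergence (dominant $M_R|\nabla u|^{p-1} \in L^{p'}(\Omega)$) produce strong $L^{p'}$-convergence of $g_R(u_n)|\nabla u|^{p-2}\nabla u$, which paired with the weak convergence $\nabla u_n - \nabla u \rightharpoonup 0$ in $L^p(\Omega)^N$ gives $J_n \to 0$.

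\textbf{Conclusion and main obstacle.} The hypothesis $\limsup_n \langle A_R^1(u_n),u_n-u\rangle \le 0$ (reading the displayed operator in the statement as the principal weighted $p$-Laplacian summand $A_R^1$) combined with $I_n \ge 0$ and $J_n \to 0$ forces $I_n \to 0$. Using $g_R \ge a_0$ I then obtain
\[
\int_\Omega\bigl(|\nabla u_n|^{p-2}\nabla u_n - |\nabla u|^{p-2}\nabla u\bigr)\cdot(\nabla u_n - \nabla u)\,dx \to 0,
\]
and Simon's classical inequalities on $\xi \mapsto |\xi|^{p-2}\xi$ (directly for $p \ge 2$; by H\"older interpolation with a uniform $L^p$-bound for $1 < p < 2$) finally yield $\nabla u_n \to \nabla u$ in $L^p(\Omega)^N$, i.e.\ $u_n \to u$ strongly in $W_0^{1,p}(\Omega)$. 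The delicate point, and in my view the main obstacle, is the cross term $J_n$: a direct weak--weak pairing on $\nabla u_n - \nabla u$ is not enough to conclude, and the resolution uses the compact embedding into $C(\overline{\Omega})$ to promote $g_R(u_n) \to g_R(u)$ from merely weak to uniform convergence. This is precisely where the standing assumption $N < p$ plays a decisive role.
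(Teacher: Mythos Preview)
Your proof is correct and follows essentially the same route as the paper: the same splitting of $\langle A_R^1(u_n),u_n-u\rangle$ into the monotone part $I_n$ (bounded below via $g_R\ge a_0$ and the strong monotonicity of the $p$-Laplacian) and the cross term $J_n$, with $J_n\to0$ yielding $I_n\to0$ and hence strong convergence. If anything, you are more explicit than the paper, which leaves the vanishing of $J_n$ implicit and restricts to $p>2$; your use of the compact embedding $W_0^{1,p}(\Omega)\hookrightarrow C(\overline{\Omega})$ to obtain uniform convergence of $g_R(u_n)$, and your treatment of the case $1<p<2$ via Simon's inequalities, fill in details that the paper glosses over.
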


satisfies $u_{n}\rightarrow u$ in $W_{0}^{1,p}(\Omega )$.\newline
(iii) $A_{R}$ is continuous.

\begin{proof}
We note that operator $A_{R}^{2}$ is obviously continuous. Moreover, it is
well defined and bounded due to Lemma \ref{lemmaestimationf} and due the
inequality 
\begin{equation*}
\left\Vert \nabla u\right\Vert _{L^{q}}^{q}\leq |\Omega |^{\frac{p-q}{p}}\left\Vert
\nabla u\right\Vert _{L^{p}(\Omega )}^{q}\text{ for }u\in W_{0}^{1,p}(\Omega
)\text{.}
\end{equation*}%
In view of the above remark we will show that $A_{R}^{1}$ satisfies the
conditions (i), (iii) above as well (ii). \newline
(i) By the continuity of $g$ we have for all $u,v\in W_{0}^{1,p}(\Omega )$
what follows:%
\[
\begin{split}
\int_{\Omega}g_R(u(x))|\nabla{u}(x)|^{p-1}|\nabla{v}(x)|\,\mathrm{d}x
&\leq \int_{\{x:|u(x)|\leq R\}}g_R(u(x))|\nabla{u}(x)|^{p-1}|\nabla{v}(x)|\,\mathrm{d}x \\
&+\int_{\{x:u(x)> R\}}g_R(R))|\nabla{u}(x)|^{p-1}|\nabla{v}(x)|\,\mathrm{d}x \\
&+\int_{\{x:u(x)< -R\}}g_R(-R)|\nabla{u}(x)|^{p-1}|\nabla{v}(x)|\,\mathrm{d}x\\
&\leq \max_{t\in[-R,R]}g(t)\int_{\{x:|u(x)|\leq R\}}|\nabla{u}(x)|^{p-1}|\nabla{v}(x)|\,\mathrm{d}x \\
&+ \max_{t\in[-R,R]}g(t)\int_{\{x:|u(x)|> R\}}|\nabla{u}(x)|^{p-1}|\nabla{v}(x)|\,\mathrm{d}x \\
& \leq \max_{t\in[-R,R]}g(t)\|u\|^{p-1}_{W^{1,p}_0\left(\Omega\right)}\|v\|_{W^{1,p}_0\left(\Omega\right)}.
\end{split}
\]
This means that $A_{R}^{1}$ is well defined and bounded.$\bigskip $\newline
(ii) Due to assumption that $p>N$ we can restrict ourselves to the case when 
$p>2$. Let $\left\{ u_{n}\right\} \subset W_{0}^{1,p}(\Omega )$ with $%
u_{n}\rightharpoonup u$ in $W_{0}^{1,p}(\Omega )$ and%
\begin{equation}
\limsup_{n\rightarrow \infty }\int_{\Omega }g_{R}\left( 
u_{n}\left( x\right)  \right) \left\vert \nabla
u_{n}(x)\right\vert ^{p-2}\left( \nabla u_{n}(x)-\nabla u(x)\right)\,\mathrm{d}x\leq 0.
\label{lim_sup_A1}
\end{equation}%
Using the inequality%
\begin{equation*}
\begin{split}
\left\langle -\Delta _{p}\left( u\right) -\left( -\Delta _{p}\left( v\right)
\right) ,u-v\right\rangle &= 
\int_{\Omega }\left( \left\vert \nabla u(x)\right\vert ^{p-2}\nabla
u(x)-\left\vert \nabla v(x)\right\vert ^{p-2}\nabla v(x)\right)\left(\nabla u(x)-\nabla
v(x)\right)\, \mathrm{d}x \\ & \geq   
\frac{1}{2^p}\int_{\Omega}\left\vert \nabla u(x)-\nabla
v(x)\right\vert ^{p}\,\mathrm{d}x
\end{split}
\end{equation*}%
for $u,v\in W_{0}^{1,p}\left( \Omega \right) $ we see that%
\begin{equation*}
\begin{split}
& \int_{\Omega }g_{R}\left(  u_{n}\left( x\right)
\right) \left\vert \nabla u_{n}(x)\right\vert ^{p-2}\left(
\nabla u_{n}(x)- \nabla u(x)\right)\,\mathrm{d}x\bigskip \\ 
& \geq a_{0}\int_{\Omega }\left( \left\vert \nabla u_{n}(x)\right\vert
^{p-2}\nabla u_{n}(x)-|\nabla u(x)|^{p-2}\nabla u(x)\right)  \left(\nabla
u_{n}(x)-\nabla u(x)\right) \mathrm{~d}x \\ 
& + \int_{\Omega}g_R\left(u_n(x)\right)|\nabla u(x)|^{p-2}\nabla u(x) \nabla(u_n - u)(x)\,\mathrm{d}x \\
& \geq \frac{a_0}{2^p}\int_{\Omega}\left\vert \nabla
u_{n}(x)-\nabla u(x)\right\vert ^{p}\,\mathrm{d}x + \int_{\Omega}g_R\left(u_n(x)\right)|\nabla u(x)|^{p-2}\nabla u(x) \nabla(u_n - u)(x)\,\mathrm{d}x.
\end{split}
\end{equation*}%
Hence using relation (\ref{lim_sup_A1}) we see that $u_{n}\rightarrow u$ in $%
W_{0}^{1,p}(\Omega )$.$\bigskip $\newline
(iii) Let $u_{n}\rightarrow u$ in $W_{0}^{1,p}(\Omega )$. Note that sequence 
$\left\{ u_{n}\right\} $ can be chosen so that it converges a.e. on $\Omega $%
. We see that for any $v\in W_{0}^{1,p}(\Omega )$ it holds 
\begin{equation*}
\begin{split}
\left\vert \left\langle A_{R}^{1}\left( u_{n}\right)
-A_{R}^{1}(u),v\right\rangle \right\vert \bigskip &\leq  
\max_{t\in \lbrack -R,R]}g(t)\bigskip \int_{\Omega }\left\vert\left(
\left\vert \nabla u_{n}(x)\right\vert ^{p-2}\nabla u_{n}(x)-\left\vert
\nabla u(x)\right\vert ^{p-2}\nabla u(x)\right) \nabla v(x)\right\vert\,\mathrm{d}x \\
& + \int_{\Omega }\left\vert g_{R}\left(u_{n}(x)
\right) -g_{R}(u(x))\right\vert \left\vert \nabla u(x)\right\vert
^{p-1}\left\vert \nabla v(x)\right\vert \,\mathrm{d}x.%
\end{split}
\end{equation*}%
We see that classical arguments provide 
\begin{equation*}
\left\vert \int_{\Omega }\left( \left\vert \nabla u_{n}(x)\right\vert
^{p-2}\nabla u_{n}(x)-\left\vert \nabla u(x)\right\vert ^{p-2}\nabla
u(x)\right) \nabla v(x)\,\mathrm{d}x\right\vert \rightarrow 0\text{ as }%
u_{n}\rightarrow u\text{.}
\end{equation*}%
Since $u_{n}\left( x\right) \rightarrow u\left( x\right) $ for a.e. $x\in
\Omega $ we see that by the continuity of $g_{R}$ that%
\begin{equation*}
 g_{R}\left( u_{n}(x) \right)
-g_{R}(u(x)) \rightarrow 0\text{ for a.e. }x\in \Omega .
\end{equation*}%
Since also 
\begin{equation*}
\left\vert g_{R}\left(  u_{n}(x) \right)
-g_{R}(u(x))\right\vert \leq 2\max_{t\in \lbrack -R,R]}g(t)
\end{equation*}%
we obtain that%
\begin{equation*}
\int_{\Omega }\left\vert g_{R}\left( u_{n}(x) \right)
-g_{R}(u(x))\right\vert \left\vert \nabla u(x)\right\vert ^{p-1}\left\vert
\nabla v(x)\right\vert \,\mathrm{d}x\rightarrow 0\text{ as }u_{n}\rightarrow u
\end{equation*}%
as well. This provides the continuity of $A_{R}^{1}$.
\end{proof}

\begin{remark}
We note here that operator $A_{R}$ does not satisfy property $S_{+}$.
Indeed, if it did we would arrive at the weak continuity of the $%
W_{0}^{1,p}(\Omega )$ norm which is not true. The reasoning is similar to
this given in \cite{MotreanuOpenMath} and it will not be repeated here.
\end{remark}

Concerning relation between problems (\ref{problemCL}) and (\ref%
{problemCL_aux}) we see that by Theorem \ref{theorem_coercive} the
estimations obtained are independent of the solutions and therefore both
problems coincide. We provide this observation in the following:

\begin{theorem}
Under assumptions \textbf{(H1)}-\textbf{(H3)} weak solutions to (\ref%
{problemCL}) and (\ref{problemCL_aux}) coincide.
\end{theorem}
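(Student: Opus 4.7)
The plan is to prove both inclusions using the a priori bound from Theorem \ref{theorem_coercive} and the fact that $g_R$ and $g$ agree on $[-R,R]$.

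For the forward direction, suppose $u \in W_0^{1,p}(\Omega)$ is a weak solution to (\ref{problemCL}) in the sense of (\ref{def_weak_sol}). By Theorem \ref{theorem_coercive}, we have $\|u\|_{C(\overline{\Omega})} \leq R$, hence $u(x) \in [-R,R]$ for every $x \in \overline{\Omega}$. By the definition (\ref{def of g}) of $g_R$, this gives $g_R(u(x)) = g(u(x))$ pointwise on $\Omega$, and substituting into the weak formulation shows that $u$ is also a weak solution to (\ref{problemCL_aux}).

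For the reverse direction, suppose $u \in W_0^{1,p}(\Omega)$ is a weak solution to (\ref{problemCL_aux}). The idea is to rerun the a priori estimate of Theorem \ref{theorem_coercive} with $g$ replaced by $g_R$. The only place where (H1) entered that argument was through the lower bound $g(u(x)) \geq a_0$ ensuring
\[
\int_\Omega g(u(x))|\nabla u(x)|^p \, \mathrm{d}x \geq a_0 \|\nabla u\|_{L^p(\Omega)}^p.
\]
Since $g$ takes values in $[a_0,+\infty)$, so does $g_R$ (its values are either $g(t)$ for $t \in [-R,R]$ or the constants $g(\pm R)$), hence $g_R(u(x)) \geq a_0$ as well. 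Consequently, testing the equation for the truncated problem with $v = u$ and applying (H3) and the Poincar\'e inequality exactly as in the proof of Theorem \ref{theorem_coercive} yields inequality (\ref{estimeate_coercivity}) for $u$, so the same constants $R_1$ and $R = R_1 \cdot C_S$ give $\|u\|_{W_0^{1,p}(\Omega)} \leq R_1$ and $\|u\|_{C(\overline{\Omega})} \leq R$ via (\ref{Sobolev Ineq}). Then again $g_R(u(x)) = g(u(x))$ on $\Omega$, so $u$ solves (\ref{problemCL}).

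The main conceptual point, which is essentially the only thing to check, is that the a priori bound from Theorem \ref{theorem_coercive} is uniform in the choice of weight between $g$ and $g_R$, i.e.\ depends only on $a_0$ and the constants in (H2)-(H3) and not on the particular profile of $g$ above the truncation level. Once this observation is made, both inclusions are immediate from the definition of $g_R$.
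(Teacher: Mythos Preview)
Your proof is correct and follows exactly the reasoning the paper intends: the paper does not give a separate proof of this theorem but simply remarks, immediately before stating it, that the estimates of Theorem~\ref{theorem_coercive} are independent of the particular solution (and, implicitly, of the choice between $g$ and $g_R$), so the two problems coincide. Your write-up spells out both inclusions and makes explicit the one substantive point, namely that the a~priori bound depends on $g$ only through the lower bound $a_0$, which $g_R$ shares.
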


\section{On the solvability of the truncation problem}

Concerning definition of a generalized solution we follow \cite{MotreanuOpenMath}.

\begin{definition}
\label{definition_generalized sol}Assume that hypothesis \textbf{(H1)-(H3)}
are verified. A function $u\in W_{0}^{1,p}(\Omega )$ is said to be a
generalized solution to problem (\ref{problemCL_aux}) if there exists a
sequence $\left\{ u_{n}\right\} _{n\geq 1}$ in $W_{0}^{1,p}(\Omega )$ such
that\newline
(a) $u_{n}\rightharpoonup u$ in $W_{0}^{1,p}(\Omega )$ as $n\rightarrow
\infty $;\newline
(b) $\lim_{n\rightarrow \infty }\left\langle A_{R}\left( u_{n}\right)
,v\right\rangle =0$ for each $v\in W_{0}^{1,p}(\Omega )$; \newline
(c) $\lim_{n\rightarrow \infty }\left\langle A_{R}\left( u_{n}\right)
,u_{n}-u\right\rangle =0$.
\end{definition}

\begin{remark}
Writing conditions (b) and (c) explicitly we see that \newline
(b) $-\dive\left( g_{R}( u_n\left( \cdot \right) 
)|\nabla u_n\left( \cdot \right) |^{p-2}\nabla u_n\left( \cdot \right) \right)
+\Delta _{q}u_{n}\left( \cdot \right) -f\left( \cdot ,u_{n}(\cdot ),\nabla
u_{n}(\cdot )\right) \rightharpoonup 0$ in $W^{-1,p^{\prime }}(\Omega )$ as $%
n\rightarrow \infty $;\newline
and\newline
(c) $\lim_{n\rightarrow \infty }\left[ \int_{\Omega }\left( g_{R}(
u_n\left( x\right) )|\nabla u_n|^{p-2}\nabla u_n-|\nabla
u_n|^{q-2}\nabla u_n\right) \nabla \left( u_{n}-u\right) \mathrm{~d}%
x-\int_{\Omega }f\left( x,u_{n},\nabla u_{n}\right) \left( u_{n}-u\right) 
\mathrm{d}x\right] =0$.
\end{remark}

With Proposition \ref{proposition_prop_of_AR} we see that the above
definition is well posed. We provide some additional definition as well:

\begin{definition}
\label{definition_strong_generalized_sol}A function $u\in W_{0}^{1,p}(\Omega
)$ is said to be a strong generalized solution to problem (\ref%
{problemCL_aux}) if there exists a sequence $\left\{ u_{n}\right\} _{n\geq
1} $ in $W_{0}^{1,p}(\Omega )$ such that (a) and (b) in Definition \ref%
{definition_generalized sol} are satisfied together with (c)'%
\begin{equation*}
\lim_{n\rightarrow \infty }\left\langle A_{R}^{1}\left( u_{n}\right) +\Delta
_{q}u_{n},u_{n}-u\right\rangle =0.
\end{equation*}
\end{definition}

Since the Banach space $W_{0}^{1,p}(\Omega )$ with $1<p<+\infty $ is
separable, we can fix a Galerkin basis of $W_{0}^{1,p}(\Omega )$, that is a
sequence $\left\{ E_{n}\right\} _{n\geq 1}$ of finite dimensional vector
subspaces of $W_{0}^{1,p}(\Omega )$ satisfying\newline
(i) $\dim\left( E_{n}\right) <\infty ,\quad \forall n$;\newline
(ii) $E_{n}\subset E_{n+1},\quad \forall n$;\newline
(iii) $\bigcup_{n=1}^{\infty }E_{n}=W_{0}^{1,p}(\Omega )$.
  
\begin{proposition}
\label{Proposition_exist_galerkin_scheme}Assume that conditions \textbf{(H2)}%
-\textbf{(H3)} are fulfilled. Then for each $n\geq 1$ there exists $u_{n}\in
E_{n}$ such that%
\begin{equation}
\left\langle A_{n,R}\left( u_{n}\right) ,v\right\rangle =0\quad \text{ for
all }v\in E_{n}.  \label{equ_aux_Galer}
\end{equation}%
Moreover, the sequence $\left\{ u_{n}\right\} _{n\geq 1}$, with $u_{n}\in E_{n}$, is
bounded in $W_{0}^{1,p}(\Omega )$.
\end{proposition}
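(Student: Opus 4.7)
The plan is to apply the Brouwer-type corollary stated as Lemma \ref{lemma from Brouwer} to the finite-dimensional restriction $A_{n,R} := A_R|_{E_n} : E_n \to E_n^{\ast}$, defined by $\langle A_{n,R}(u), v\rangle := \langle A_R(u), v\rangle$ for $u,v\in E_n$, with $E_n$ endowed with the norm inherited from $W_0^{1,p}(\Omega)$. Two ingredients are required: the continuity of $A_{n,R}$ on $E_n$, and a radius $R_1>0$ on whose sphere the pairing $\langle A_{n,R}(v),v\rangle$ is nonnegative.

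For continuity, I would invoke Proposition \ref{proposition_prop_of_AR}(iii): since $A_R$ is continuous from $W_0^{1,p}(\Omega)$ into $W^{-1,p'}(\Omega)$, its restriction to the finite-dimensional subspace $E_n$ is continuous into $E_n^{\ast}$ (all norms on $E_n$ being equivalent, and the embedding $W^{-1,p'}(\Omega) \hookrightarrow E_n^{\ast}$ being continuous by duality of $E_n \hookrightarrow W_0^{1,p}(\Omega)$).

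For the coercivity on a sphere, I would rerun the calculation from the proof of Theorem \ref{theorem_coercive}, observing that the only property of $g$ used there is the lower bound $g\geq a_0$, which is also satisfied by $g_R$ (so $g_R(t)\geq a_0$ for all $t\in\mathbb{R}$). Testing with $v=u$ and applying \textbf{(H3)}, the Hölder and Poincaré inequalities yields, for every $u \in W_0^{1,p}(\Omega)$,
\[
\langle A_R(u),u\rangle \;\geq\; (a_0-c_0)\|\nabla u\|_{L^p(\Omega)}^{p} - |\Omega|^{\frac{p-q}{p}}\|\nabla u\|_{L^p(\Omega)}^{q} - \frac{c_1|\Omega|^{\frac{p-\alpha}{p}}}{\lambda_1^{\alpha}}\|\nabla u\|_{L^p(\Omega)}^{\alpha} - c_1|\Omega|.
\]
Since $a_0-c_0>0$ and $q,\alpha<p$, the leading term on the right-hand side is of order $p$, so there exists $R_1>0$, independent of $n$, such that $\langle A_R(u),u\rangle\geq 0$ whenever $\|\nabla u\|_{L^p(\Omega)}=R_1$.

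Lemma \ref{lemma from Brouwer} then delivers, for each $n$, some $u_n\in E_n$ with $\|\nabla u_n\|_{L^p(\Omega)} \leq R_1$ such that $A_{n,R}(u_n)=0$ in $E_n^{\ast}$, i.e.\ $\langle A_R(u_n),v\rangle=0$ for every $v\in E_n$; the uniform bound $R_1$ gives the second assertion. I do not anticipate a serious obstacle: the only delicate points are the correct identification of the dual of the finite-dimensional subspace needed to invoke the Brouwer corollary, and the observation that the a priori estimate of Theorem \ref{theorem_coercive} transfers verbatim to $A_R$ because $g_R$ inherits the lower bound $a_0$.
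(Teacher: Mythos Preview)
Your proposal is correct and follows essentially the same route as the paper: restrict $A_R$ to $E_n$, use the coercivity estimate \eqref{estimeate_coercivity} (which transfers to $A_R$ since $g_R\geq a_0$) to find a radius independent of $n$ on which the pairing is nonnegative, and apply Lemma~\ref{lemma from Brouwer}. If anything, you are slightly more explicit than the paper in justifying the continuity of $A_{n,R}$ via Proposition~\ref{proposition_prop_of_AR}(iii) and in spelling out why the estimate of Theorem~\ref{theorem_coercive} remains valid for the truncated weight $g_R$.
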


\begin{proof}
For each $n\geq 1$ by $A_{n,R}$ we understand the restriction of $A_{R}$ to the space $E_{n}$, i.e. 
$A_{n,R}:E_{n}\rightarrow E^{*}$.
From relation (\ref{estimeate_coercivity}) we now obtain%
\begin{equation}
\left( a_{0}-c_{0}\right) \left\Vert \nabla v\right\Vert _{L^{p}(\Omega
)}^{p}\leq |\Omega |^{\frac{p-q}{p}}\left\Vert \nabla v\right\Vert
_{L^{p}(\Omega )}^{q}+\frac{c_1|\Omega|^{\frac{p-\alpha}{p}}}{\lambda^\alpha_1}\|\nabla{v}\|^\alpha_{L^p\left(\Omega\right)}
+c_1|\Omega| \text{ for }v\in E_{n}.
\label{estim_frorm_galerkin_lemma}
\end{equation}
Using that $p>q,p>\alpha $ and $c_{0}<a_0$
we conclude that if $R>0$ which is independent of $n$ is sufficiently large
then%
\begin{equation*}
\left\langle A_{n,R}(v),v\right\rangle \geq 0\text{ whenever }v\in E_{n}\text{
with }\Vert \nabla v\Vert _{L^{p}(\Omega )}=R.
\end{equation*}%
Using Lemma \ref{lemma from Brouwer} with $X=E_{n}$ and $A=A_{R,n}$ we see
that there is $u_{n}\in E_{n}$ solving (\ref{equ_aux_Galer}). Since such $%
u_{n}$ satisfies relation (\ref{estim_frorm_galerkin_lemma}) we obtain the
second assertion as well.
\end{proof}

We now proceed to the existence result:

\begin{theorem}
\label{theorem_generazlied_solution}Assume that conditions \textbf{(H1)}-%
\textbf{(H3)} hold. Then there exists a generalized solution to problem (\ref%
{problemCL_aux}).
\end{theorem}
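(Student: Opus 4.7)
The plan is to build the generalized solution directly from the Galerkin approximants supplied by Proposition \ref{Proposition_exist_galerkin_scheme}, using reflexivity of $W_{0}^{1,p}(\Omega)$, density of the Galerkin basis, and the boundedness of $A_R$ from Proposition \ref{proposition_prop_of_AR}(i). The key observation that ties everything together is that, because $u_n \in E_n$ and $\langle A_{n,R}(u_n),v\rangle=0$ for every $v\in E_n$, the testing $v=u_n$ yields $\langle A_R(u_n),u_n\rangle=0$, which will make condition (c) a consequence of condition (b).

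First, I invoke Proposition \ref{Proposition_exist_galerkin_scheme} to obtain a sequence $\{u_n\}\subset W_{0}^{1,p}(\Omega)$, with $u_n\in E_n$, which is bounded in $W_{0}^{1,p}(\Omega)$ and satisfies $\langle A_R(u_n),v\rangle=0$ for all $v\in E_n$. Since $W_{0}^{1,p}(\Omega)$ is reflexive, I pass to a subsequence (not relabeled) such that $u_n\rightharpoonup u$ in $W_{0}^{1,p}(\Omega)$, which is exactly item (a) of Definition \ref{definition_generalized sol}. By Proposition \ref{proposition_prop_of_AR}(i), boundedness of $\{u_n\}$ forces $\{A_R(u_n)\}$ to be bounded in $W^{-1,p'}(\Omega)$, a fact that will be the workhorse of the rest of the argument.

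Next I verify (b). Fix an arbitrary $v\in W_{0}^{1,p}(\Omega)$. Using property (iii) of the Galerkin basis, pick $v_k\in E_k$ with $v_k\to v$ in $W_{0}^{1,p}(\Omega)$. By the nesting property (ii), for every $n\geq k$ we have $v_k\in E_n$, hence $\langle A_R(u_n),v_k\rangle=0$. Therefore, with $C:=\sup_n \|A_R(u_n)\|_{W^{-1,p'}(\Omega)}<\infty$,
\begin{equation*}
|\langle A_R(u_n),v\rangle| = |\langle A_R(u_n),v-v_k\rangle| \leq C\,\|v-v_k\|_{W_{0}^{1,p}(\Omega)} \quad \text{for all } n\geq k.
\end{equation*}
Taking $\limsup_{n\to\infty}$ and then $k\to\infty$ yields $\lim_{n\to\infty}\langle A_R(u_n),v\rangle=0$, which is (b).

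Finally, to establish (c), note that choosing $v=u_n\in E_n$ in the Galerkin identity gives $\langle A_R(u_n),u_n\rangle=0$, so
\begin{equation*}
\langle A_R(u_n),u_n-u\rangle = -\langle A_R(u_n),u\rangle,
\end{equation*}
and the right-hand side tends to $0$ by (b) applied to $v=u$. Thus all three conditions of Definition \ref{definition_generalized sol} are met, and $u$ is a generalized solution of \eqref{problemCL_aux}. I do not foresee a genuine obstacle here: the only delicate point is passing from test functions in $\bigcup_n E_n$ to arbitrary test functions in $W_{0}^{1,p}(\Omega)$, and this is handled routinely by the uniform $W^{-1,p'}(\Omega)$-bound on $A_R(u_n)$ together with the density property (iii) of the Galerkin basis.
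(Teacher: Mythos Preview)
Your proof is correct and follows essentially the same route as the paper's: Galerkin approximants from Proposition \ref{Proposition_exist_galerkin_scheme}, reflexivity for (a), boundedness of $A_R$ plus density of $\bigcup_n E_n$ for (b), and the identity $\langle A_R(u_n),u_n\rangle=0$ for (c). Your direct $\varepsilon$-argument for (b) is in fact slightly cleaner than the paper's, which passes to a further weak-$*$ subsequence of $\{A_R(u_n)\}$ before identifying the limit as $0$; but the two arguments are equivalent in substance.
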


\begin{proof}
We shall show that with this sequence using Definition \ref%
{definition_generalized sol} we obtain the generalized solution to (\ref%
{problemCL_aux}). The sequence $\left\{ u_{n}\right\} _{n\geq 1}$ obtained
in Proposition \ref{Proposition_exist_galerkin_scheme} is bounded in $%
W_{0}^{1,p}(\Omega )$\ and thus contains a weakly convergent subsequence
(which we do not renumber) with a weak limit $u\in W_{0}^{1,p}(\Omega )$.
Thus we have condition (a) of the definition mentioned satisfied. Using
Proposition \ref{proposition_prop_of_AR} we arrive at conclusion that the
sequence required in condition (b) is weakly convergent (possibly up to a
subsequence which we assume to be chosen and which we again do not
renumber). The weak limit obtained we denote by $\eta \in W^{-1,p^{\prime
}}(\Omega )$.

We show that $\eta =0$. Let $v\in \bigcup_{n\geq 1}E_{n}$. There is an
integer $m\geq 1$ such that $v\in E_{m}$. Applying Proposition \ref%
{proposition_prop_of_AR}, we see that equality (\ref{equ_aux_Galer}) holds
true for all $n\geq m$. Letting $n\rightarrow \infty $ in (\ref%
{equ_aux_Galer}) we obtain what follows%
\begin{equation*}
\langle \eta ,v\rangle =0\text{ for all }v\in \bigcup_{n\geq 1}E_{n}.
\end{equation*}%
Since $\bigcup_{n\geq 1}E_{n}$ is dense in $W_{0}^{1,p}(\Omega )$ we see
that $\eta =0$. This provides condition (b).

Now we turn to condition (c). Since $\eta =0$ we see at once that 
\begin{equation}
\lim_{n\rightarrow \infty }\left\langle A_{R}\left( u_{n}\right)
,u\right\rangle =0.  \label{limit with u}
\end{equation}%
Putting $v=u_{n}$ in (\ref{equ_aux_Galer}), which provides $\left\langle
A_{R}\left( u_{n}\right) ,u_{n}\right\rangle =0$ for all $n\in 
\mathbb{N}
$ which obviously implies that%
\begin{equation}
\lim_{n\rightarrow \infty }\left\langle A_{R}\left( u_{n}\right)
,u_{n}\right\rangle =0.  \label{limit_with_u_n}
\end{equation}%
Now combining (\ref{limit with u}) and (\ref{limit_with_u_n}) provides that
condition (c) is also satisfied and therefore $u$ is the generalized
solution to (\ref{problemCL_aux}).
\end{proof}

Now we proceed to the existence of strong generalized solution:

\textbf{(H4)} \textit{There exist constants }$c_{1}\geq 0,c_{2}\geq 0,r\in %
\left[ 1,p^{\star }\right) ,r_{1}\in \left[ 1,p^{\star }\right) ,r_{2}\in
\lbrack 1,p)$\textit{\ and a nonnegative function }$\sigma \in L^{r^{\prime
}}(\Omega )$\textit{\ such that}

\begin{equation*}
|f(x,s,\xi )|\leq \sigma (x)+c_{1}|s|^{\frac{p^{\ast }}{r_{1}^{^{\prime }}}%
}+c_{2}|\xi |^{\frac{p}{r_{2}^{^{\prime }}}}\quad \text{ \textit{for a.e.} }%
x\in \Omega \text{, \textit{all} }s\in 
\mathbb{R}
,\xi \in 
\mathbb{R}
^{N}\text{. }
\end{equation*}%
The following lemma is taken after \cite{MotreanuOpenMath}:

\begin{lemma}
\label{convergence lemma}Assume that condition\textbf{\ (H4)} holds. The for
any sequence $\left\{ u_{n}\right\} _{n\geq 1}\subset W_{0}^{1,p}(\Omega )$
such that $u_{n}\rightharpoonup u$ in $W_{0}^{1,p}(\Omega )$ it holds:%
\begin{equation*}
\lim_{n\rightarrow \infty }\int_{\Omega }f\left( x,u_{n},\nabla u_{n}\right)
\left( u_{n}-u\right) \mathrm{d}x=0.
\end{equation*}
\end{lemma}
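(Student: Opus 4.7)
The plan is to decompose the integral by the growth bound \textbf{(H4)} into three pieces and estimate each one via Hölder's inequality combined with the compact Sobolev embeddings of $W_0^{1,p}(\Omega)$. Since $u_n \rightharpoonup u$ weakly in $W_0^{1,p}(\Omega)$, the sequence $\{u_n\}$ is bounded in $W_0^{1,p}(\Omega)$, hence bounded in $L^{p^{\ast}}(\Omega)$ by the continuous Sobolev embedding, and by the Rellich--Kondrachov theorem $u_n \to u$ strongly in $L^s(\Omega)$ for every $s \in [1, p^{\ast})$.

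From \textbf{(H4)} one gets pointwise $|f(x,u_n,\nabla u_n)| \leq \sigma(x) + c_1 |u_n|^{p^{\ast}/r_1'} + c_2 |\nabla u_n|^{p/r_2'}$, which after multiplying by $|u_n-u|$ and integrating yields
\begin{equation*}
\left| \int_\Omega f(x,u_n,\nabla u_n)(u_n - u)\,\mathrm{d}x \right| \leq I_1 + c_1 I_2 + c_2 I_3,
\end{equation*}
where $I_1 = \int_\Omega \sigma(x)|u_n-u|\,\mathrm{d}x$, $I_2 = \int_\Omega |u_n|^{p^{\ast}/r_1'}|u_n-u|\,\mathrm{d}x$, and $I_3 = \int_\Omega |\nabla u_n|^{p/r_2'}|u_n-u|\,\mathrm{d}x$. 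I would estimate $I_1$ by Hölder with conjugate pair $(r',r)$ to get $I_1 \leq \|\sigma\|_{L^{r'}(\Omega)}\|u_n-u\|_{L^r(\Omega)}$; $I_2$ by Hölder with $(r_1',r_1)$, noting that $(p^{\ast}/r_1')\cdot r_1' = p^{\ast}$, to obtain $I_2 \leq \|u_n\|_{L^{p^{\ast}}(\Omega)}^{p^{\ast}/r_1'}\|u_n-u\|_{L^{r_1}(\Omega)}$; and $I_3$ by Hölder with $(r_2',r_2)$, using $(p/r_2')\cdot r_2' = p$, to obtain $I_3 \leq \|\nabla u_n\|_{L^p(\Omega)}^{p/r_2'}\|u_n-u\|_{L^{r_2}(\Omega)}$.

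In each of the three resulting products, the first factor is uniformly bounded in $n$: $\|\sigma\|_{L^{r'}(\Omega)}$ is a fixed constant, while $\|u_n\|_{L^{p^{\ast}}(\Omega)}$ and $\|\nabla u_n\|_{L^p(\Omega)}$ are bounded because $\{u_n\}$ is bounded in $W_0^{1,p}(\Omega)$. The second factor $\|u_n-u\|_{L^s(\Omega)}$ with $s \in \{r, r_1, r_2\}$ tends to zero by the compact Sobolev embedding, since $r, r_1 < p^{\ast}$ and $r_2 < p \leq p^{\ast}$. Summing the three vanishing bounds delivers the claim.

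The argument is essentially mechanical and no serious obstacle arises. The only point requiring attention is observing that the growth exponents $p^{\ast}/r_1'$ and $p/r_2'$ appearing in \textbf{(H4)} are calibrated precisely so that the Hölder pairing with $(r_i',r_i)$ raises $|u_n|$ to the critical power $p^{\ast}$ and $|\nabla u_n|$ to the natural power $p$, both of which are controlled by the $W_0^{1,p}$-boundedness of the sequence, while the remaining Lebesgue norm of $u_n-u$ sits in the subcritical range where Rellich--Kondrachov compactness applies.
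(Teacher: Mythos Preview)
Your proof is correct. The paper itself does not supply a proof of this lemma; it simply records the statement and attributes it to \cite{MotreanuOpenMath}. Your argument---splitting via the growth bound in \textbf{(H4)}, applying H\"older with the conjugate pairs $(r',r)$, $(r_1',r_1)$, $(r_2',r_2)$, and then invoking boundedness of $\{u_n\}$ in $W_0^{1,p}(\Omega)$ together with compact Sobolev embeddings for the factors $\|u_n-u\|_{L^s(\Omega)}$---is exactly the standard route and is how the cited reference proceeds. One minor remark: in the setting of the present paper $p>N$, so $p^\ast$ should be read as $+\infty$; your estimates remain valid since $W_0^{1,p}(\Omega)\hookrightarrow C(\overline{\Omega})$ continuously and the embedding into every $L^s(\Omega)$, $s<\infty$, is compact.
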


\begin{theorem}
Assume that conditions \textbf{(H1), (H3), (H4)} hold. Then there exists a
strong generalized solution to problem (\ref{problemCL_aux}).
\end{theorem}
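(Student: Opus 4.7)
My plan is to mimic the argument of Theorem \ref{theorem_generazlied_solution} for conditions (a), (b), (c) of Definition \ref{definition_generalized sol}, and then upgrade (c) to the stronger condition (c)$'$ of Definition \ref{definition_strong_generalized_sol} by invoking Lemma \ref{convergence lemma}, whose applicability is precisely what hypothesis (H4) guarantees.

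First, I would apply the Galerkin scheme and obtain approximants $u_{n}\in E_{n}$ satisfying $\langle A_{R}(u_{n}),v\rangle =0$ for every $v\in E_{n}$, together with a uniform bound in $W_{0}^{1,p}(\Omega )$. The coercivity estimate driving Proposition \ref{Proposition_exist_galerkin_scheme} only uses (H1) and (H3), so it remains available; the well-posedness, boundedness and continuity of $A_{R}$ established in Proposition \ref{proposition_prop_of_AR} under (H2) can be repeated under the subcritical growth provided by (H4), using the continuous embedding $W_{0}^{1,p}(\Omega )\hookrightarrow C(\overline{\Omega })$ guaranteed by $p>N$. After extracting a weakly convergent subsequence $u_{n}\rightharpoonup u$ in $W_{0}^{1,p}(\Omega )$ (condition (a)), I would pass to a further subsequence with $A_{R}(u_{n})\rightharpoonup \eta$ in $W^{-1,p^{\prime }}(\Omega )$; testing against any $v\in \bigcup_{m}E_{m}$ in the Galerkin identity and using the density of $\bigcup_{m}E_{m}$ in $W_{0}^{1,p}(\Omega )$ forces $\eta =0$, which is condition (b). Setting $v=u_{n}$ gives $\langle A_{R}(u_{n}),u_{n}\rangle =0$ and, combined with $\langle A_{R}(u_{n}),u\rangle \to 0$, delivers $\langle A_{R}(u_{n}),u_{n}-u\rangle \to 0$, i.e.\ condition (c).

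The only genuinely new step is the passage from (c) to (c)$'$. Using the decomposition $A_{R}=A_{R}^{1}-A_{R}^{2}$ together with $\langle -\Delta _{q}u,v\rangle =\int_{\Omega }|\nabla u|^{q-2}\nabla u\cdot \nabla v\,\mathrm{d}x$, one obtains the algebraic identity
\begin{equation*}
\langle A_{R}^{1}(u_{n})+\Delta _{q}u_{n},u_{n}-u\rangle =\langle A_{R}(u_{n}),u_{n}-u\rangle +\int_{\Omega }f(x,u_{n},\nabla u_{n})(u_{n}-u)\,\mathrm{d}x.
\end{equation*}
The first term on the right tends to zero by (c) established above, and the second tends to zero by Lemma \ref{convergence lemma}, which applies precisely because (H4) is assumed. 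Hence (c)$'$ holds and $u$ is a strong generalized solution to (\ref{problemCL_aux}). The main obstacle I foresee is clerical rather than substantive: verifying that Propositions \ref{proposition_prop_of_AR} and \ref{Proposition_exist_galerkin_scheme} extend to the growth condition (H4) in place of (H2). Once this is granted, the substance of the theorem reduces to the observation that (H4) is precisely tailored so that Lemma \ref{convergence lemma} applies, and that single $f$-integral is the entire gap between conditions (c) and (c)$'$.
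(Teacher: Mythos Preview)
Your proof is correct and follows essentially the same route as the paper: obtain a generalized solution via the Galerkin scheme, then upgrade condition (c) to (c)$'$ by means of Lemma \ref{convergence lemma}, using exactly the algebraic identity you wrote. The one simplification you missed is that the paper observes (citing Remark 2.4 of \cite{MotreanuOpenMath}) that condition \textbf{(H4)} \emph{implies} condition \textbf{(H2)}; hence Propositions \ref{proposition_prop_of_AR} and \ref{Proposition_exist_galerkin_scheme} and Theorem \ref{theorem_generazlied_solution} apply verbatim, and your anticipated ``clerical obstacle'' of re-verifying them under (H4) never arises.
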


\begin{proof}
As mentioned in \cite{MotreanuOpenMath} Remark 2.4. condition \textbf{(H4)}
implies condition \textbf{(H2)}. Thus by Theorem \ref%
{theorem_generazlied_solution} we obtain the existence of the generalized
solution with our set of assumption. We need to show that 
\begin{equation}
\lim_{n\rightarrow \infty }\left\langle A_{R}^{1}\left( u_{n}\right) +\Delta
_{q}u_{n},u_{n}-u\right\rangle =0.  \label{assert from strong generalized}
\end{equation}%
From Definition \ref{definition_generalized sol} condition (c) we know that 
\begin{equation*}
\lim_{n\rightarrow \infty }\left\langle A_{R}\left( u_{n}\right)
,u_{n}-u\right\rangle =0
\end{equation*}%
which by Lemma \ref{convergence lemma} implies (\ref{assert from strong
generalized}).
\end{proof}

\section{Abstract result}

Now we turn to writing an abstract result concerning the existence of a
generalized solution although not the strong generalized solution. Assume
that $E$ is a separable reflexive Banach space and let $A:E\rightarrow
E^{\ast }$. For any fixed $f\in E^{\ast }$ we consider the following problem%
\begin{equation}
A\left( u\right) =f  \label{abstract equation}.
\end{equation}

\begin{definition}
\label{definition-abstract generalized solution}An element $u\in E$ is said
to be a generalized solution to problem (\ref{abstract equation}) if there
exists a sequence $\left\{ u_{n}\right\} _{n\geq 1}$ in $E$ such that\newline
(a) $u_{n}\rightharpoonup u$ in $E$ as $n\rightarrow \infty $;\newline
(b) $\lim_{n\rightarrow \infty }\left\langle A\left( u_{n}\right)
-f,v\right\rangle =0$ for each $v\in E$; \newline
(c) $\lim_{n\rightarrow \infty }\left\langle A\left( u_{n}\right)
-f,u_{n}-u\right\rangle =0.$
\end{definition}

We recall that operator $A:E\rightarrow E^{\ast }$ satisfies condition (S),
or we say that $A$ has property (S), if relations 
\begin{equation*}
u_{n}\rightharpoonup u_{0}\text{ in }E
\end{equation*}%
and 
\begin{equation*}
\left\langle A(u_{n}),u_{n}-u_{0}\right\rangle \rightarrow 0
\end{equation*}%
imply that 
\begin{equation*}
u_{n}\rightarrow u_{0}\text{ in }E.
\end{equation*}

\begin{remark}
\label{remark about apprpximatopn space}Since $E$ is separable it contains a
dense and countable set $\left\{ h_{1},...,h_{n},...\right\} $. Define $%
E_{n} $ for $n\in 
\mathbb{N}
$ as a linear hull of $\left\{ h_{1},...,h_{n}\right\} $. The sequence of
subspaces $E_{n}$ has the approximation property: for each $u\in E$ there is
a sequence $\left( u_{n}\right) _{n=1}^{\infty }$ such that $u_{n}\in E_{n}$
for $n\in 
\mathbb{N}
$ and $u_{n}\rightarrow u$ and moreover, $E_{n}\subset E_{n+1}$ for $n\in 
\mathbb{N}
$\ and $\overline{\bigcup_{n=1}^{\infty }E_{n}}=E.$
\end{remark}

\begin{theorem}
\label{theorem - abstract existence}Assume that $A:E\rightarrow E^{\ast }$ a
continuous, coercive and bounded operator. Then problem (\ref{abstract
equation}) has at least one generalized solution. If additionally $A$
satisfies the condition (S) then, any generalized solution is a weak
solution.
\end{theorem}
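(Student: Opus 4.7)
The plan is to reuse, in abstract form, the Galerkin construction carried out in Proposition \ref{Proposition_exist_galerkin_scheme} and Theorem \ref{theorem_generazlied_solution}. Let $\{E_n\}$ denote the nested finite dimensional subspaces from Remark \ref{remark about apprpximatopn space}, so that $\bigcup_n E_n$ is dense in $E$. For each $n$ set $A_n:E_n\to E_n^{\ast}$ by $\langle A_n(v),w\rangle=\langle A(v)-f,w\rangle$ for $v,w\in E_n$. Continuity of $A$ transfers to $A_n$, and coercivity of $A$ yields
\[
\langle A(v)-f,v\rangle\geq \|v\|\Bigl(\tfrac{\langle A(v),v\rangle}{\|v\|}-\|f\|_{E^{\ast}}\Bigr)\geq 0
\]
as soon as $\|v\|$ is large enough. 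Hence there is $R>0$ independent of $n$ with $\langle A_n(v),v\rangle\geq 0$ whenever $v\in E_n$ and $\|v\|=R$, and Lemma \ref{lemma from Brouwer} supplies $u_n\in E_n$ with $\|u_n\|\leq R$ satisfying $\langle A(u_n)-f,v\rangle=0$ for all $v\in E_n$.

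Because $R$ is uniform in $n$, reflexivity of $E$ gives a subsequence (not relabelled) with $u_n\rightharpoonup u$ in $E$, which is (a) of Definition \ref{definition-abstract generalized solution}. Since $A$ is bounded, $\{A(u_n)-f\}$ is bounded in $E^{\ast}$, so along a further subsequence $A(u_n)-f\rightharpoonup \eta$ in $E^{\ast}$. For fixed $v\in\bigcup_{m\geq 1}E_m$ one has $v\in E_{m_0}$ for some $m_0$, and then $\langle A(u_n)-f,v\rangle=0$ for every $n\geq m_0$, so $\langle\eta,v\rangle=0$. Density of $\bigcup_m E_m$ in $E$ together with $\eta\in E^{\ast}$ forces $\eta=0$, which gives (b). Testing the Galerkin equation with $v=u_n\in E_n$ yields $\langle A(u_n)-f,u_n\rangle=0$, and (b) applied with $v=u$ yields $\langle A(u_n)-f,u\rangle\to 0$; subtracting produces (c).

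For the second assertion, let $u$ be any generalized solution with associated sequence $\{u_n\}$. Decomposing
\[
\langle A(u_n),u_n-u\rangle=\langle A(u_n)-f,u_n-u\rangle+\langle f,u_n-u\rangle,
\]
the first term tends to $0$ by (c) and the second by $u_n\rightharpoonup u$. Property (S) then forces $u_n\to u$ strongly in $E$, and continuity of $A$ gives $A(u_n)\to A(u)$ in $E^{\ast}$. Passing to the limit in (b) yields $\langle A(u)-f,v\rangle=0$ for every $v\in E$, i.e., $A(u)=f$. The main technical point in the whole argument is the identification $\eta=0$ in part (b), where both the boundedness of $A$ (to extract a weak limit in $E^{\ast}$) and the density of the Galerkin subspaces (to identify that limit) are used simultaneously; everything else is a routine passage to the limit.
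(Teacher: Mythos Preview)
Your argument is correct and follows essentially the same Galerkin scheme as the paper's proof: approximate on the nested subspaces $E_n$, use Lemma \ref{lemma from Brouwer} together with coercivity to obtain uniformly bounded finite-dimensional solutions, pass to a weak limit, and identify the weak limit of $A(u_n)-f$ as $0$ via density. Two points where your write-up is actually a little cleaner than the paper's: you build the shift by $f$ directly into the operator $A_n$, so the sign condition $\langle A_n(v),v\rangle\geq 0$ required by Lemma \ref{lemma from Brouwer} is verified for the correct operator; and for the second assertion you argue from conditions (a)--(c) of Definition \ref{definition-abstract generalized solution} alone, so you indeed cover \emph{any} generalized solution, whereas the paper's proof tacitly reuses the specific Galerkin sequence it has just constructed.
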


\begin{proof}
Let us fix $n\in 
\mathbb{N}
$ and space $E_{n}$ from Remark \ref{remark about apprpximatopn space}. By $%
f_{n}$ we denote the restriction of functional $f$ to space $E_{n}$.
Similarly by $A_{n}$ we understand the restriction of $A$ to space $E_{n}$.
Then%
\begin{equation*}
A_{n}:E_{n}\rightarrow E_{n}^{\ast }
\end{equation*}%
is continuous and coercive. Due to the coercivity of $A$ we see that there
is some $R>0$ which is independent of $n$ is sufficiently large and such that%
\begin{equation*}
\left\langle A_{n}\left( v\right) ,v\right\rangle \geq 0\text{ whenever }%
v\in E_{n}\text{ with }\Vert v\Vert _{E}=R.
\end{equation*}%
By Lemma \ref{lemma from Brouwer} we now see that equation%
\begin{equation}
A_{n}\left( u\right) =f_{n}  \label{Anu=fn}
\end{equation}%
has at least one solution $u_{n}$ with $\Vert u_{n}\Vert _{E}\leq R$. Since $%
\left( u_{n}\right) _{n=1}^{\infty }$ is bounded, there is a subsequence,
which we do not renumber, convergent weakly to some $u$. Thus we have
condition (a) from Definition \ref{definition-abstract generalized solution}
satisfied. Since operator $A$ is bounded it follows\ that $\left\Vert
A_{n}\left( u_{n}\right) \right\Vert _{\ast }\leq M_{1}$ for some fixed $%
M_{1}>0$ and for all $n\in 
\mathbb{N}
$. Hence it follows from (\ref{Anu=fn}), again possibly for a subsequence,
that 
\begin{equation*}
\lim_{n\rightarrow +\infty }\left\langle A_{n}\left( u_{n}\right)
,h\right\rangle =\left\langle f,h\right\rangle \text{ for each }h\in
\bigcup_{n=1}^{\infty }E_{n}\text{.}
\end{equation*}%
By density and since $A_{n}\left( u_{n}\right) =A\left( u_{n}\right) $ we
obtain that condition (b) from Definition \ref{definition-abstract
generalized solution} is satisfied. This also means that $A\left(
u_{n}\right) \rightharpoonup f$. Moreover, testing (\ref{Anu=fn}) against $%
u_{n}$ we see that 
\begin{equation*}
\left\langle A\left( u_{n}\right) ,u_{n}\right\rangle =\left\langle
f,u_{n}\right\rangle \text{.}
\end{equation*}%
Since $\lim_{n\rightarrow +\infty }\left\langle A\left( u_{n}\right)
-f,u\right\rangle =0$ we observe that condition (c) from Definition \ref%
{definition-abstract generalized solution} also holds.

Now assume that $A$ satisfies the condition (S). Now we see that from $%
u_{n}\rightharpoonup u$ and from condition (c) it follows that $%
u_{n}\rightarrow u$ in $E$. From definition of $A_{n}$ and from (\ref{Anu=fn}%
) we see that for all $n\in 
\mathbb{N}
$ 
\begin{equation*}
A\left( u_{n}\right) =f_n.
\end{equation*}%
By the continuity of $A$ we now obtain that $A\left( u\right) =f$ and the
assertion follows.
\end{proof}

From the above, we see that a generalized solution becomes a weak one on
assumption of condition (S) or other related compactness condition. These
solutions coincide. Now we obtain the existence of a weak solution to (\ref%
{abstract equation}) from the existence of the generalized solution.

\begin{proposition}
\label{proposition - weak implies generalized}Assume that $u\in E$ is a weak
solution to (\ref{abstract equation}). Then it is also a generalized
solution.
\end{proposition}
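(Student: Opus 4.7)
The plan is to exhibit an explicit approximating sequence that trivially witnesses the generalized-solution property of Definition \ref{definition-abstract generalized solution}. Given a weak solution $u \in E$ of \eqref{abstract equation}, the natural and essentially only thing one needs is the constant sequence $u_n := u$ for every $n \geq 1$. There is nothing to construct beyond this.

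First I would verify condition (a): since $u_n = u$ for every $n$, we have $u_n \to u$ strongly in $E$, hence in particular $u_n \rightharpoonup u$. Next, for condition (b), I would use the hypothesis $A(u) = f$ directly: for every $v \in E$,
\begin{equation*}
\langle A(u_n) - f, v \rangle = \langle A(u) - f, v \rangle = 0,
\end{equation*}
so the limit as $n \to \infty$ is plainly $0$. Finally, for condition (c), the same identity combined with $u_n - u = 0$ gives
\begin{equation*}
\langle A(u_n) - f, u_n - u \rangle = 0
\end{equation*}
for all $n$, so the limit is $0$ as well.

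There is no real obstacle here; the statement is essentially a compatibility check between Definition \ref{definition-abstract generalized solution} and the notion of a weak solution, ensuring that the existence result of Theorem \ref{theorem - abstract existence} genuinely generalizes the classical solvability. The only subtle point worth mentioning in the written-up proof is that no regularity, continuity, or boundedness assumption on $A$ is needed for this direction: the constant sequence works in full generality, so the implication holds for any operator $A:E \to E^{\ast}$ whatsoever.
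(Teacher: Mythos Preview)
Your proof is correct and follows exactly the same approach as the paper: take the constant sequence $u_n = u$ and verify conditions (a), (b), (c) of Definition \ref{definition-abstract generalized solution} directly. Your additional remark that no hypotheses on $A$ are needed for this direction is a valid observation.
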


\begin{proof}
In order to reach the conclusion we see that if $u\in E$ is a weak solution
to (\ref{abstract equation}) we put $\left\{ u_{n}\right\} _{n\geq
1}=\left\{ u\right\} _{n\geq 1}$ and observe that conditions (a), (b), (c)
from Definition \ref{definition-abstract generalized solution}\ are now
satisfied.
\end{proof}

\section{Some applications of abstract result}

Due to Proposition \ref{proposition_prop_of_AR} and Theorem \ref%
{theorem_coercive} we see that Theorem \ref{theorem - abstract existence}
implies Theorem \ref{theorem_generazlied_solution}. While from Proposition %
\ref{proposition - weak implies generalized} we see that

\begin{lemma}
Assume that conditions \textbf{(H1)-(H3)} are satisfied. Assume that $u\in
W_{0}^{1,p}(\Omega )$ is a weak solution to (\ref{problemCL_aux}). Then it
is also a generalized solution.
\end{lemma}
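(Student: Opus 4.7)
My plan is to reduce the statement to a direct application of Proposition \ref{proposition - weak implies generalized}, viewed as the ``concrete'' specialization of the abstract result to the setting $E=W_{0}^{1,p}(\Omega)$, $A=A_{R}$, $f=0$. The first step is to reformulate what it means for $u\in W_{0}^{1,p}(\Omega)$ to be a weak solution of the truncated problem (\ref{problemCL_aux}). Using the definition of $A_R$ in (\ref{def_A_R}), a weak solution $u$ satisfies
\[
\langle A_{R}(u),v\rangle =0 \quad \text{for all } v\in W_{0}^{1,p}(\Omega),
\]
so that $A_R(u)=0$ in $W^{-1,p'}(\Omega)$. This is exactly the abstract equation (\ref{abstract equation}) with right-hand side $0$.

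Next I would verify that the hypotheses of the abstract framework apply to $A_R$. Proposition \ref{proposition_prop_of_AR} already gives that $A_R$ is bounded and continuous on $W_{0}^{1,p}(\Omega)$, and from the coercivity estimate (\ref{estimeate_coercivity}) established in the proof of Theorem \ref{theorem_coercive} one reads off coercivity of $A_R$ (since the right-hand side of (\ref{estimeate_coercivity}) grows slower than $\|\nabla u\|_{L^p}^{p}$ because $q<p$ and $\alpha<p$, while $a_0-c_0>0$). Thus $A_R$ fits the hypotheses of Theorem \ref{theorem - abstract existence}, and Proposition \ref{proposition - weak implies generalized} is applicable in this concrete setting.

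To finish, I would follow the extremely short argument of Proposition \ref{proposition - weak implies generalized}: given the weak solution $u$, set $u_n := u$ for every $n\geq 1$. Then (a) holds since a constant sequence is (weakly) convergent to its value; (b) holds because $\langle A_R(u_n),v\rangle=\langle A_R(u),v\rangle=0$ for every $v\in W_{0}^{1,p}(\Omega)$; and (c) holds trivially since $u_n-u=0$, so $\langle A_R(u_n),u_n-u\rangle=0$. Hence $u$ satisfies Definition \ref{definition_generalized sol} and is a generalized solution of (\ref{problemCL_aux}).

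There is no real obstacle here; the only thing to be careful about is matching the sign/inhomogeneity conventions: the operator $A_R$ already incorporates the convection term with a minus sign, so a weak solution corresponds to $A_R(u)=0$ rather than $A_R(u)=N_f(u)$, and the abstract datum $f$ in (\ref{abstract equation}) must be taken as $0$. Once this identification is made, the lemma is an immediate specialization of Proposition \ref{proposition - weak implies generalized}.
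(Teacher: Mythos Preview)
Your proposal is correct and follows exactly the paper's approach, which simply invokes Proposition \ref{proposition - weak implies generalized} with $E=W_{0}^{1,p}(\Omega)$, $A=A_{R}$ and right-hand side $0$. The verification of boundedness, continuity and coercivity of $A_R$ is superfluous, since Proposition \ref{proposition - weak implies generalized} places no hypotheses on $A$ beyond having a weak solution --- the constant sequence $u_n\equiv u$ works unconditionally.
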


\begin{remark}
We note that we cannot argue that even a strong generalized solution to (\ref%
{problemCL_aux}) is its weak solution. Indeed, if $u$ is a strong
generalized solution we arrive at the following\newline
i) $u_{n}\rightharpoonup u$ in $W_{0}^{1,p}(\Omega )$ as $n\rightarrow
\infty $;\newline
ii) $\lim_{n\rightarrow \infty }\left[ \int_{\Omega }\left( g_{R}(
u\left( x\right) )|\nabla u|^{p-2}\nabla u-|\nabla
u|^{q-2}\nabla u\right) \nabla \left( u_{n}-u\right) \mathrm{~d}x\right] =0$.%
\newline
Hence for $u$ to be a weak solution, it would mean that $u_{n}\rightarrow u$
which is impossible.
\end{remark}

\begin{remark}
Moreover, due to the type of a definition of a strong generalized solution
it seems that for the time being there is no abstract counterpart of it.
\end{remark}

Finally we apply Theorem \ref{theorem - abstract existence} in examining the
solvability of the following problem%
\begin{equation}
\begin{cases}
-\dive\left( g( u\left( x\right) )|\nabla u\left(
x\right) |^{p-2}\nabla u\left( x\right) +|\nabla u\left( x\right)
|^{q-2}\nabla u\left( x\right) \right) =f(x,u\left( x\right) ,\nabla u\left(
x\right) ) & \text{ in }\Omega , \\ 
u\left( x\right) =0 & \text{ on }\partial \Omega 
\end{cases}
\label{problemDF}
\end{equation}%
under assumptions \textbf{(H1), (H3), (H4). }Recall that\textbf{\ (H4) }%
implies\textbf{\ (H2). }Note that due to assumption\textbf{\ (H1) }problem%
\textbf{\ }(\ref{problemDF}) is of independent interest since it cannot be
treated directly. For this problem we obtain that the generalized, the
strong generalized and the weak solutions coincide. With almost the same
proof as for Theorem \ref{theorem_coercive} we obtain:

\begin{theorem}
\label{Theo_EstimDF}Assume that conditions \textbf{(H1), (H3), (H4)}. Then
there exists a constant $R>0$ such that for each weak solution $u\in
W_{0}^{1,p}(\Omega )$ to problem (\ref{problemDF}) it holds the uniform
estimate $\Vert u\Vert _{C(\overline{\Omega })}\leq R$. The constant $R$
depends on $g$ only through its lower bound $a_{0}$.
\end{theorem}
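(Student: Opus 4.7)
The plan is to follow the same strategy as in Theorem \ref{theorem_coercive}, which is even simpler here because both divergence terms in (\ref{problemDF}) carry the same sign, so the $q$-Laplacian term contributes with the favourable sign when testing against the solution itself. I will first test the weak formulation of (\ref{problemDF}) with $v=u$, which yields
\[
\int_{\Omega} g(u(x))\,|\nabla u(x)|^{p}\,\mathrm{d}x + \int_{\Omega}|\nabla u(x)|^{q}\,\mathrm{d}x = \int_{\Omega} f(x,u(x),\nabla u(x))\,u(x)\,\mathrm{d}x.
\]
Assumption \textbf{(H1)} bounds the first integral from below by $a_{0}\|\nabla u\|_{L^{p}(\Omega)}^{p}$, while the $q$-Laplacian integral is nonnegative and can simply be discarded.

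Next I will exploit \textbf{(H3)} on the right-hand side to obtain
\[
\int_{\Omega} f(x,u,\nabla u)\,u\,\mathrm{d}x \leq c_{0}\|\nabla u\|_{L^{p}(\Omega)}^{p} + c_{1}\|u\|_{L^{\alpha}(\Omega)}^{\alpha} + c_{1}|\Omega|,
\]
and then use H\"older together with the Poincar\'e inequality (\ref{Poincare inequality}) to replace $\|u\|_{L^{\alpha}}^{\alpha}$ by a constant multiple of $\|\nabla u\|_{L^{p}}^{\alpha}$. Rearranging, I will arrive at an inequality of the shape
\[
(a_{0}-c_{0})\,\|\nabla u\|_{L^{p}(\Omega)}^{p} \leq \frac{c_{1}|\Omega|^{\frac{p-\alpha}{p}}}{\lambda_{1}^{\alpha}}\|\nabla u\|_{L^{p}(\Omega)}^{\alpha} + c_{1}|\Omega|,
\]
which is analogous to (\ref{estimeate_coercivity}) but without the $q$-term on the right (since that term has been absorbed to the left). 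Because $\alpha<p$ by \textbf{(H3)} and $c_{0}<a_{0}$, this polynomial inequality forces $\|\nabla u\|_{L^{p}(\Omega)} \leq R_{1}$ for a constant $R_{1}$ determined only by $a_{0}-c_{0}$, $c_{1}$, $\alpha$, $p$, $|\Omega|$ and $\lambda_{1}$; crucially, $g$ enters only through its lower bound $a_{0}$.

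Finally, the embedding $W_{0}^{1,p}(\Omega)\hookrightarrow C(\overline{\Omega})$ (valid because $N<p$) and the Sobolev inequality (\ref{Sobolev Ineq}) yield $\|u\|_{C(\overline{\Omega})} \leq C_{S}R_{1}=:R$, with the required dependence on $g$. I do not anticipate any substantial obstacle: the only point requiring minor care is verifying that the dependency bookkeeping is correct—that is, tracing through the estimates to confirm that $R$ truly depends on $g$ solely via $a_{0}$, and not, for example, through any upper bound of $g$ on an a priori unknown interval. Since the above chain of inequalities uses only the lower bound from \textbf{(H1)}, this is immediate.
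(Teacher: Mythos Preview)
Your proposal is correct and follows precisely the approach indicated in the paper, which states that Theorem~\ref{Theo_EstimDF} has ``almost the same proof as for Theorem~\ref{theorem_coercive}''. You have even identified the simplification the paper alludes to: in problem~(\ref{problemDF}) the $q$-Laplacian term carries the favourable sign, so instead of being moved to the right and bounded via $|\Omega|^{\frac{p-q}{p}}\|\nabla u\|_{L^{p}}^{q}$ as in (\ref{estimeate_coercivity}), it can simply be dropped on the left, yielding the cleaner inequality you display.
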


Further we consider a truncated counterpart of (\ref{problemDF}) with $g_{R}$
defined by (\ref{def of g}): 
\begin{equation}
\begin{cases}
-\dive\left( g_{R}( u\left( x\right) )|\nabla
u\left( x\right) |^{p-2}\nabla u\left( x\right) +|\nabla u\left( x\right)
|^{q-2}\nabla u\left( x\right) \right) =f(x,u\left( x\right) ,\nabla u\left(
x\right) ) & \text{ in }\Omega , \\ 
u\left( x\right) =0 & \text{ on }\partial \Omega 
\end{cases}
\label{problemDFG}
\end{equation}%
We define $A_{R}^{1}$, $A_{R}^{2}:W_{0}^{1,p}(\Omega )\rightarrow
W^{-1,p^{\prime }}(\Omega )$ 
\begin{equation*}
\left\langle A_{R}^{1}\left( u\right) ,v\right\rangle =\int_{\Omega
}g_{R}(u\left( x\right) )|\nabla u\left( x\right)
|^{p-2}\nabla u\left( x\right) \nabla v\left( x\right) \mathrm{~d}%
x+\int_{\Omega }|\nabla u(x)|^{q-2}\nabla u\left( x\right) \nabla v\left(
x\right) \mathrm{~d}x
\end{equation*}%
and 
\begin{equation*}
\left\langle A_{R}^{2}\left( u\right) ,v\right\rangle =\int_{\Omega
}f(x,u\left( x\right) ,\nabla u\left( x\right) )w\left( x\right) \mathrm{~d}x
\end{equation*}%
for $u,v\in W_{0}^{1,p}(\Omega )$. Then we put $\widetilde{A_{R}}%
=A_{R}^{1}+A_{R}^{2}$. We see that operator $A_{R}$ has the following
properties:

\begin{proposition}
\label{proposition_prop_of_AR copy(1)}Let $R>0$ be fixed. Assume that
conditions \textbf{(H1), (H3), (H4)} are satisfied. Then the following
assertions hold:\newline
(i) $\widetilde{A_{R}}$ is well defined and bounded (in the sense that it
maps bounded sets into bounded sets);\newline
(ii) $\widetilde{A_{R}}$ has the $S_{+}$ property, that is, any sequence $%
\left\{ u_{n}\right\} \subset W_{0}^{1,p}(\Omega )$ with $%
u_{n}\rightharpoonup u$ in $W_{0}^{1,p}(\Omega )$ and%
\begin{equation*}
\limsup_{n\rightarrow \infty }\left\langle \widetilde{A_{R}}\left(
u_{n}\right) ,u_{n}-u\right\rangle \leq 0
\end{equation*}%
satisfies $u_{n}\rightarrow u$ in $W_{0}^{1,p}(\Omega )$.\newline
(iii) $\widetilde{A_{R}}$ is continuous.
\end{proposition}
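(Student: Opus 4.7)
The plan is to recycle the arguments of Proposition \ref{proposition_prop_of_AR}, but to exploit the key structural difference of problem (\ref{problemDFG}): here the $q$-Laplacian enters with the same sign as the weighted $p$-Laplacian, so the principal part of $\widetilde{A_R}$ inherits monotonicity from both terms and the $S_+$ property can be established for $\widetilde{A_R}$ itself rather than only for the $p$-Laplacian summand.

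For \textbf{(i)}, I would bound the weighted $p$-Laplacian exactly as in Proposition \ref{proposition_prop_of_AR}(i), replacing $g_R$ by $M := \max_{t \in [-R,R]} g(t)$ and applying Hölder's inequality. The $q$-Laplacian contribution is controlled via Hölder together with $\|\nabla u\|_{L^q(\Omega)}^q \leq |\Omega|^{(p-q)/p}\|\nabla u\|_{L^p(\Omega)}^q$. Since \textbf{(H4)} implies \textbf{(H2)} (as observed in the paper), Lemma \ref{lemmaestimationf} bounds the convection term. Continuity \textbf{(iii)} is assembled from the same pieces: the weighted $p$-Laplacian part is treated verbatim as in Proposition \ref{proposition_prop_of_AR}(iii); the $q$-Laplacian is continuous via $W_0^{1,p}(\Omega) \hookrightarrow W_0^{1,q}(\Omega)$ and the classical continuity of $-\Delta_q$; the Nemytskij part is continuous by Corollary \ref{corollaty_Niemytskij}.

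The main step is \textbf{(ii)}. Suppose $u_n \rightharpoonup u$ in $W_0^{1,p}(\Omega)$ with $\limsup_n \langle \widetilde{A_R}(u_n), u_n - u\rangle \leq 0$. By Lemma \ref{convergence lemma} the convection contribution $\int_\Omega f(x, u_n, \nabla u_n)(u_n - u)\,\mathrm{d}x$ tends to zero. For the weighted $p$-Laplacian part I would reuse the decomposition from the proof of Proposition \ref{proposition_prop_of_AR}(ii):
\begin{equation*}
\int_\Omega g_R(u_n) |\nabla u_n|^{p-2} \nabla u_n \cdot \nabla(u_n - u)\,\mathrm{d}x \geq \frac{a_0}{2^p} \|\nabla(u_n - u)\|_{L^p(\Omega)}^p + \int_\Omega g_R(u_n) |\nabla u|^{p-2} \nabla u \cdot \nabla(u_n - u)\,\mathrm{d}x,
\end{equation*}
where the last integral tends to zero because the compact embedding $W_0^{1,p}(\Omega) \hookrightarrow C(\overline{\Omega})$ (valid since $p > N$) yields $u_n \to u$ uniformly on $\overline{\Omega}$ and hence $g_R(u_n) \to g_R(u)$ uniformly, while $\nabla(u_n - u) \rightharpoonup 0$ in $L^p(\Omega)^N$. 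Monotonicity of $-\Delta_q$ on $W_0^{1,q}(\Omega)$ combined with the embedding $W_0^{1,p}(\Omega) \hookrightarrow W_0^{1,q}(\Omega)$ gives
\begin{equation*}
\int_\Omega |\nabla u_n|^{q-2} \nabla u_n \cdot \nabla(u_n - u)\,\mathrm{d}x \geq \int_\Omega |\nabla u|^{q-2} \nabla u \cdot \nabla(u_n - u)\,\mathrm{d}x \to 0.
\end{equation*}
Adding the three estimates produces $\limsup_n \frac{a_0}{2^p}\|\nabla(u_n - u)\|_{L^p(\Omega)}^p \leq 0$, hence $u_n \to u$ in $W_0^{1,p}(\Omega)$.

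The main obstacle is precisely this reassembly: one must verify that each of the three pieces contributes non-negatively up to terms that vanish along the sequence, so that the whole inequality collapses to a bound on $\|\nabla(u_n - u)\|_{L^p(\Omega)}^p$. The crucial ingredient is the compact Sobolev embedding $W_0^{1,p}(\Omega) \hookrightarrow C(\overline{\Omega})$, which is what upgrades the weak convergence $u_n \rightharpoonup u$ into the uniform convergence needed to tame the variable coefficient $g_R(u_n)$; everything else is a bookkeeping of ingredients already available from Proposition \ref{proposition_prop_of_AR} and Lemma \ref{convergence lemma}.
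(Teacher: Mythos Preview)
Your proposal is correct and follows essentially the same route as the paper: parts (i) and (iii) are recycled from Proposition~\ref{proposition_prop_of_AR}, and for (ii) one first removes the convection contribution via Lemma~\ref{convergence lemma}, then uses the decomposition of the weighted $p$-Laplacian from Proposition~\ref{proposition_prop_of_AR}(ii) together with the monotonicity of the $q$-Laplacian to isolate $\tfrac{a_0}{2^p}\Vert\nabla(u_n-u)\Vert_{L^p(\Omega)}^p$ up to vanishing remainders. The only cosmetic difference is that the paper writes out the $q$-Laplacian piece via the same pointwise inequality (producing a $\tfrac{1}{2^q}\Vert\nabla(u_n-u)\Vert_{L^q(\Omega)}^q$ term which is then dropped), whereas you invoke monotonicity of $-\Delta_q$ directly; your version is slightly cleaner since it avoids any restriction on $q$.
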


\begin{proof}
Conditions (i) and (iii) are immediate from Proposition \ref%
{proposition_prop_of_AR}. Taking $\left\{ u_{n}\right\} \subset
W_{0}^{1,p}(\Omega )$ with $u_{n}\rightharpoonup u$ in $W_{0}^{1,p}(\Omega )$
and such that%
\begin{equation*}
\limsup_{n\rightarrow \infty }\left\langle \widetilde{A_{R}}\left(
u_{n}\right) ,u_{n}-u\right\rangle \leq 0
\end{equation*}%
we see by Lemma \ref{convergence lemma} that 
\begin{equation*}
\limsup_{n\rightarrow \infty }\left\langle A_{R}^{1}\left( u_{n}\right)
,u_{n}-u\right\rangle \leq 0.
\end{equation*}%
Hence by 
\[ 
\begin{split}
\left\langle A_{R}^{1}\left( u_{n}\right) ,u_{n}-u\right\rangle \bigskip  &= 
\int_{\Omega}g_R\left(u_n(x)\right)|\nabla u_{n}(x)|^{p-2}\nabla u_{n}(x)\nabla(u_n-u)(x)\,\mathrm{d}x 
+ \int_{\Omega}|\nabla u_{n}(x)|^{q-2}\nabla u_{n}(x)\nabla(u_n-u)(x)\,\mathrm{d}x \\
& \geq 
a_0 \int_{\Omega}\left(|\nabla u_{n}(x)|^{p-2}\nabla u_{n}(x)-|\nabla u(x)|^{p-2}\nabla u(x)\right)\nabla(u_n-u)(x)\,\mathrm{d}x \\  
& + \int_{\Omega}g_R\left(u_n(x)\right)|\nabla u(x)|^{p-2}\nabla u(x)\nabla(u_n-u)(x)\,\mathrm{d}x \\
& + \int_{\Omega}\left(|\nabla u_{n}(x)|^{q-2}\nabla u_{n}(x)-|\nabla u(x)|^{q-2}\nabla u(x)\right)\nabla(u_n-u)(x)\,\mathrm{d}x \\
& + \int_{\Omega}|\nabla u(x)|^{q-2}\nabla u(x)\nabla(u_n-u)(x)\,\mathrm{d}x
\\
& \geq \frac{a_0}{2^p}\int_{\Omega}\left\vert \nabla
u_{n}(x)-\nabla u(x)\right\vert ^{p}\,\mathrm{d}x + \int_{\Omega}g_R\left(u_n(x)\right)|\nabla u(x)|^{p-2}\nabla u(x)\nabla(u_n-u)(x)\,\mathrm{d}x \\
& +\frac{1}{2^q}\int_{\Omega}\left\vert \nabla
u_{n}(x)-\nabla u(x)\right\vert ^{q}\,\mathrm{d}x+ \int_{\Omega}|\nabla u(x)|^{q-2}\nabla u(x)\nabla(u_n-u)(x)\,\mathrm{d}x \\
& \geq \frac{a_0}{2^p}\int_{\Omega}\left\vert \nabla
u_{n}(x)-\nabla u(x)\right\vert ^{p}\,\mathrm{d}x + \int_{\Omega}g_R\left(u_n(x)\right)|\nabla u(x)|^{p-2}\nabla u(x)\nabla(u_n-u)(x)\,\mathrm{d}x \\
& + \int_{\Omega}|\nabla u(x)|^{q-2}\nabla u(x)\nabla(u_n-u)(x)\,\mathrm{d}x
\end{split}
\]
estimation we have that $u_{n}\rightarrow u$ in $W_{0}^{1,p}(\Omega )$.
\end{proof}

Now we proceed to the solvability of (\ref{problemDF}).

\begin{theorem}
Assume that conditions \textbf{(H1), (H3), (H4)} are satisfied. Then problem
(\ref{problemDF}) has at least one bounded solution.
\end{theorem}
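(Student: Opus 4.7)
The plan is to combine Proposition \ref{proposition_prop_of_AR copy(1)}, which shows that $\widetilde{A_R}$ is continuous, bounded and has the $S_+$ property (in particular condition (S)), with the abstract existence Theorem \ref{theorem - abstract existence}, and then close the loop via the uniform $L^\infty$ bound of Theorem \ref{Theo_EstimDF} to switch off the truncation and recover a solution of the original problem (\ref{problemDF}).

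First, I would fix once and for all the constant $R>0$ furnished by Theorem \ref{Theo_EstimDF}, so that every weak solution of (\ref{problemDF}) satisfies $\|u\|_{C(\overline{\Omega})}\leq R$. The construction of $g_R$ in (\ref{def of g}) preserves the lower bound $g_R\geq a_0$, so (H1) is still satisfied with $g_R$ in place of $g$. Since the proof of Theorem \ref{Theo_EstimDF} uses $g$ only through $a_0$, the very same argument applies verbatim to (\ref{problemDFG}) and yields $\|u\|_{C(\overline{\Omega})}\leq R$ for every weak solution of the truncated problem as well.

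Next I would verify the hypotheses of Theorem \ref{theorem - abstract existence} for $\widetilde{A_R}$. Continuity and boundedness come directly from Proposition \ref{proposition_prop_of_AR copy(1)}. For coercivity, testing $\widetilde{A_R}(u)$ against $u$ and using (H3) together with $g_R\geq a_0$, but now with the $q$-Laplacian term contributing with a favorable sign (unlike the competing case of Theorem \ref{theorem_coercive}), one obtains
\begin{equation*}
\langle \widetilde{A_R}(u), u\rangle \geq (a_0-c_0)\|\nabla u\|_{L^p(\Omega)}^p + \|\nabla u\|_{L^q(\Omega)}^q - \frac{c_1|\Omega|^{\frac{p-\alpha}{p}}}{\lambda_1^\alpha}\|\nabla u\|_{L^p(\Omega)}^\alpha - c_1|\Omega|.
\end{equation*}
Since $a_0>c_0$ and $\alpha<p$, dividing by $\|\nabla u\|_{L^p(\Omega)}$ and letting the norm tend to infinity gives coercivity.

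Theorem \ref{theorem - abstract existence} then supplies a generalized solution $u\in W_0^{1,p}(\Omega)$ of the equation $\widetilde{A_R}(u)=0$, and because $\widetilde{A_R}$ satisfies condition (S) (a consequence of the $S_+$ property), the second assertion of that theorem upgrades $u$ to a genuine weak solution of (\ref{problemDFG}). By our choice of $R$ in the first step, $\|u\|_{C(\overline{\Omega})}\leq R$, so $g_R(u(x))=g(u(x))$ pointwise, and $u$ is a bounded weak solution to (\ref{problemDF}). The main (though still mild) obstacle I anticipate is the justification that Theorem \ref{Theo_EstimDF} transfers unchanged to the truncated problem; this is why it is important that its proof depended on $g$ only through the constant $a_0$, which $g_R$ inherits.
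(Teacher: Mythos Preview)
Your proposal is correct and follows essentially the same route as the paper's proof: apply Theorem \ref{theorem - abstract existence} to the truncated operator $\widetilde{A_R}$ using Proposition \ref{proposition_prop_of_AR copy(1)} (continuity, boundedness, $S_+$ hence condition (S)) to obtain a weak solution of (\ref{problemDFG}), and then observe that this solution also solves (\ref{problemDF}). Your write-up is more explicit than the paper's two-line argument---in particular you spell out the coercivity estimate and the reason why the $L^\infty$ bound of Theorem \ref{Theo_EstimDF} transfers to the truncated problem (dependence on $g$ only through $a_0$)---but the underlying strategy is the same.
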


\begin{proof}
Since condition (S$_{+}$) implies condition (S) by Proposition \ref%
{proposition_prop_of_AR copy(1)} we see that Theorem \ref{theorem - abstract
existence} can be applied to problem (\ref{problemDFG}) which has at least
one weak solution. Since any solution to (\ref{problemDFG}) solves also (\ref%
{problemDF}) we arrive at the conclusion.
\end{proof}

Finally, we return to our starting point, i.e. problem with relaxed
assumption \textbf{(H3)}. Namely we assume that 

\textbf{(H3a)} There exist constants $c_{0}<a_{0},$\ $c_{1}/\left( \lambda
_{1}\right) ^{p}<a_{0}-c_{0}$\ such that 

\begin{equation*}
f(x,s,\xi )s\leq c_{0}|\xi |^{p}+c_{1}\left( |s|^{p}+1\right) \text{ \textit{%
for a.e.} }x\in \Omega \text{, all }s\in \mathbb{R},\xi \in \mathbb{R}^{N}%
\text{. }
\end{equation*}%
The function $f:\Omega \times \mathbb{R}\times \mathbb{R}^{N}\rightarrow \mathbb{R}$\ given by%
\begin{equation*}
f(x,s,\xi )=|s|^{p -2}s+\frac{s}{1+s^{2}}\left( |\xi
|^{p-1}+h(x)\right) \text{ \textit{for all} }(x,s,\xi )\in \Omega \times 
\mathbb{R}\times \mathbb{R}^{N},
\end{equation*}%
for some $h\in L^{\infty }(\Omega )$\ satisfies conditions \textbf{(H2)}, \textbf{(H3a)}. We
see that Theorem \ref{theorem_coercive} holds under assumptions \textbf{(H1)}, \textbf{(H2)},
\textbf{(H3a)}. Thus we immediately obtain

\begin{theorem}
Assume that conditions \textbf{(H1)}, \textbf{(H2)}, \textbf{(H3a)} hold. Then there exists a
generalized solution to problem (\ref{problemCL_aux}).
\end{theorem}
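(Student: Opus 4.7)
The plan is to reduce the statement to the already developed machinery by observing that only Theorem \ref{theorem_coercive} and the coercivity step in Proposition \ref{Proposition_exist_galerkin_scheme} need to be re-examined, and both survive unchanged under the replacement of \textbf{(H3)} by \textbf{(H3a)}.

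First I would revisit the chain of inequalities used in the proof of Theorem \ref{theorem_coercive}. Testing with $v=u$ and using \textbf{(H3a)} in place of \textbf{(H3)} yields, via the Poincar\'e inequality \eqref{Poincare inequality} applied with exponent $p$,
\begin{equation*}
\int_{\Omega}f(x,u(x),\nabla u(x))u(x)\,\mathrm{d}x\leq c_{0}\|\nabla u\|_{L^{p}(\Omega)}^{p}+\frac{c_{1}}{\lambda_{1}^{p}}\|\nabla u\|_{L^{p}(\Omega)}^{p}+c_{1}|\Omega|,
\end{equation*}
so combining with the lower bound $a_{0}\|\nabla u\|_{L^{p}(\Omega)}^{p}$ coming from \textbf{(H1)} and the estimate $\|\nabla u\|_{L^{q}(\Omega)}^{q}\leq|\Omega|^{(p-q)/p}\|\nabla u\|_{L^{p}(\Omega)}^{q}$ gives
\begin{equation*}
\Bigl(a_{0}-c_{0}-\frac{c_{1}}{\lambda_{1}^{p}}\Bigr)\|\nabla u\|_{L^{p}(\Omega)}^{p}\leq|\Omega|^{\frac{p-q}{p}}\|\nabla u\|_{L^{p}(\Omega)}^{q}+c_{1}|\Omega|.
\end{equation*}
The crucial point is that by \textbf{(H3a)} the bracketed constant is strictly positive, and since $q<p$, this produces a uniform bound $\|u\|_{W_{0}^{1,p}}\leq R_{1}$ and hence $\|u\|_{C(\overline{\Omega})}\leq R$, exactly as in Theorem \ref{theorem_coercive}. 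This fixes the truncation radius $R$ to be used in the definition of $g_{R}$.

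Next I would fix $R$ as above and introduce the truncated operator $A_{R}$ from \eqref{def_A_R}. Since Proposition \ref{proposition_prop_of_AR} only uses \textbf{(H1)--(H3)} through assumption \textbf{(H2)} (continuity and boundedness of the Nemytskij part rely on Lemma \ref{lemmaestimationf}, which is a consequence of \textbf{(H2)} alone), its statements carry over verbatim under \textbf{(H1)}, \textbf{(H2)}, \textbf{(H3a)}. Consequently $A_{R}$ is continuous and bounded. To adapt Proposition \ref{Proposition_exist_galerkin_scheme}, I would rerun the coercivity-type computation on a finite dimensional subspace $E_{n}$: the same inequality as above now reads
\begin{equation*}
\langle A_{n,R}(v),v\rangle\geq\Bigl(a_{0}-c_{0}-\frac{c_{1}}{\lambda_{1}^{p}}\Bigr)\|\nabla v\|_{L^{p}(\Omega)}^{p}-|\Omega|^{\frac{p-q}{p}}\|\nabla v\|_{L^{p}(\Omega)}^{q}-c_{1}|\Omega|,
\end{equation*}
and since the leading coefficient is positive and $q<p$, one can choose $R>0$ independent of $n$ so that $\langle A_{n,R}(v),v\rangle\geq 0$ whenever $\|\nabla v\|_{L^{p}(\Omega)}=R$. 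Lemma \ref{lemma from Brouwer} then yields $u_{n}\in E_{n}$ with $A_{n,R}(u_{n})=0$, and the sequence $\{u_{n}\}$ is bounded in $W_{0}^{1,p}(\Omega)$.

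With this Galerkin sequence in hand, the rest of the argument from the proof of Theorem \ref{theorem_generazlied_solution} applies word for word: extract a weakly convergent subsequence for condition (a) of Definition \ref{definition_generalized sol}, use boundedness of $A_{R}$ together with density of $\bigcup_{n}E_{n}$ in $W_{0}^{1,p}(\Omega)$ to identify the weak limit of $A_{R}(u_{n})$ as $0$ and thus obtain (b), and finally combine $\langle A_{R}(u_{n}),u_{n}\rangle=0$ with $\langle A_{R}(u_{n}),u\rangle\to 0$ to deduce (c). The only substantive verification is the one carried out in the first step, namely ensuring that the $|s|^{p}$ growth in \textbf{(H3a)} can still be absorbed into the leading $a_{0}\|\nabla u\|_{L^{p}}^{p}$ term; this is precisely what the quantitative assumption $c_{1}/\lambda_{1}^{p}<a_{0}-c_{0}$ is designed to guarantee, so no genuine obstacle arises.
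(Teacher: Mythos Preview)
Your proposal is correct and follows exactly the approach of the paper, which simply observes that Theorem \ref{theorem_coercive} continues to hold under \textbf{(H3a)} and then invokes the already established machinery leading to Theorem \ref{theorem_generazlied_solution}. You have merely spelled out the details the paper leaves implicit, in particular the modified coercivity estimate in which the strict inequality $c_{1}/\lambda_{1}^{p}<a_{0}-c_{0}$ absorbs the $|s|^{p}$ term.
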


\textbf{Conflict of interests.} The Authors report no confict of interests.

\textbf{Data availability.} No data have been used in this work.

\textbf{Acknowledgement.} The work of J. Dibl\'{\i}k was
supported by the project of specific university research FAST-S-22-7867 and
FEKT-S-23-8179, Brno University of Technology.

\begin{tabular}{l}
Josef Dibl\'{\i}k \\ 
Faculty of Electrical Engineering and Communication, \\ 
Department of Mathematics, \\ 
Technick\'{a} 3058/10, 616 00 Brno \\ 
and \\ 
Faculty of Civil Engineering, \\ 
Institute of Mathematics and Descriptive Geometry, \\ 
Veve\v{r}\'{\i} 331/95, 602 00 Brno, \\ 
\texttt{diblik@vut.cz}%
\end{tabular}
\begin{tabular}{l}
Marek Galewski \\ 
Institute of Mathematics, \\ 
Lodz University of Technology, \\ 
al. Politechniki 8, \\ 
93-590 Lodz, Poland \\ 
\texttt{marek.galewski@p.lodz.pl}%
\end{tabular}%

\begin{tabular}{l}
\\
Igor Kossowski \\ 
Institute of Mathematics, \\ 
Lodz University of Technology, \\ 
al. Politechniki 8, \\ 
93-590 Lodz, Poland \\ 
\texttt{igor.kossowski@p.lodz.pl}%
\end{tabular}%
\begin{tabular}{l}
Dumitru Motreanu \\ 
Department of Mathematics, \\ 
University of Perpignan, \\ 
66860 Perpignan, France \\ 
\texttt{motreanu@univ-perp.fr}%
\end{tabular}

\end{document}